%
%
%
%
%
\documentclass[smallextended]{svjour3}       
\smartqed  
\usepackage{graphicx}
\usepackage{amsmath}
\usepackage{amssymb}
\usepackage{amsfonts}
\usepackage{graphicx}
\usepackage{epstopdf}
\usepackage{url}
\usepackage{hyperref}
\usepackage{booktabs}
\usepackage{algorithm}
\usepackage{algpseudocode}
\usepackage{multirow}
\usepackage{enumitem}
\newtheorem{assumption}{Assumption} 

\newcommand{\R}{\mathbb{R}}
\newcommand{\Rext}{\mathbb{R}\cup\{+\infty\}}

\newcommand{\set}[1]{\left\{#1\right\}}
\newcommand{\sets}[1]{\{#1\}}
\newcommand{\norm}[1]{\left\Vert #1\right\Vert}
\newcommand{\norms}[1]{\Vert #1\Vert}
\newcommand{\argmin}{\mathrm{arg}\!\min}

\newcommand{\Eproof}{\hfill$\square$}

\newcommand{\Gc}{\mathcal{G}}

\newcommand{\Xc}{\mathcal{X}}
\newcommand{\Yc}{\mathcal{Y}}

\newcommand{\Rc}{\mathcal{R}}

\newcommand{\Lc}{\mathcal{L}}
\newcommand{\Fc}{\mathcal{F}}
\newcommand{\Tc}{\mathcal{T}}
\newcommand{\Vc}{\mathcal{V}}
\newcommand{\dom}[1]{\mathrm{dom}\left(#1\right)}

\newcommand{\iprod}[1]{\left\langle #1\right\rangle}
\newcommand{\iprods}[1]{\langle #1\rangle}

\newcommand{\prox}{\textrm{prox}}

\newcommand{\BigO}[1]{\mathcal{O}\left(#1\right)}

\newcommand{\beforesubsec}{\vspace{-3.5ex}}
\newcommand{\aftersubsec}{\vspace{-2ex}}
\newcommand{\beforesec}{\vspace{-3ex}}
\newcommand{\aftersec}{\vspace{-2ex}}

%
\begin{document}

\title{
A Unified Convergence Rate Analysis of The Accelerated Smoothed Gap Reduction Algorithm
}

\titlerunning{
A Unified Convergence Rate Analysis of The ASGARD Algorithm
}        

\author{Quoc Tran-Dinh$^{*}$}

\authorrunning{Q. Tran-Dinh} 

\institute{Quoc Tran-Dinh \at
              Department of Statistics and Operations Research\\ 
              The University of North Carolina at Chapel Hill\\
              333 Hanes Hall, UNC-CH, Chapel Hill, NC27599. \\
              \email{quoctd@email.unc.edu}  
}

\date{Received: date / Accepted: date}

\maketitle

\begin{abstract}
In this paper, we develop a unified convergence analysis framework for the \textit{Accelerated Smoothed GAp ReDuction algorithm} (ASGARD) introduced in \cite[\textit{Tran-Dinh et al}]{TranDinh2015b}.
Unlike \cite{TranDinh2015b}, the new analysis covers three settings in a single algorithm: general convexity, strong convexity, and strong convexity and smoothness.
Moreover, we establish the convergence guarantees on three criteria: (i) gap function, (ii) primal objective residual, and (iii) dual objective residual.
Our convergence rates are optimal (up to a constant factor) in all cases.
While the convergence rate on the primal objective residual for the general convex case has been established in \cite{TranDinh2015b}, 
we prove additional convergence rates on the gap function and the dual objective residual.
The analysis for the last two cases is completely new.
Our results provide a complete picture on the convergence guarantees of ASGARD.
Finally, we present four different numerical experiments on a representative optimization model to verify our algorithm and compare it with Nesterov's smoothing technique.

\keywords{
Accelerated smoothed gap reduction
\and
primal-dual algorithm
\and
Nesterov's smoothing technique 
\and
convex-concave saddle-point problem.
}
\subclass{90C25 \and 90C06 \and 90-08}
\end{abstract}

\beforesec
\section{Introduction}\label{sec:intro}
\aftersec
We consider the following classical convex-concave saddle-point problem:
\begin{equation}\label{eq:saddle_point_prob}
\min_{x\in\R^p}\max_{y\in\R^n}\Big\{ \Lc(x, y) := f(x) + \iprods{Kx, y} - g^{*}(y) \Big\},
\end{equation}
where $f : \R^p \to \Rext$ and $g : \R^n\to\Rext$ are proper, closed, and convex, $K : \R^p \to  \R^n$  is a linear operator, and $g^{*}(y) := \sup_{x}\set{ \iprods{y, x} - g(x)}$ is the Fenchel conjugate of $g$.

The convex-concave saddle-point problem \eqref{eq:saddle_point_prob} can be written in the primal and dual forms.
The primal problem is defined as
\begin{equation}\label{eq:primal_prob}
F^{\star} := \min_{x\in\R^p}\Big\{F(x) :=  f(x) + g(Kx)  \Big\}.
\end{equation}
The corresponding dual problem is written as
\begin{equation}\label{eq:dual_prob}
D^{\star} := \min_{y\in\R^n}\Big\{ D(y) := f^{*}(-K^{\top}y) + g^{*}(y) \Big\}.
\end{equation}
Clearly,  both the primal problem \eqref{eq:primal_prob} and its dual form  \eqref{eq:dual_prob} are convex.

\vspace{0.75ex}
\noindent\textit{\textbf{Motivation.}}
In \cite{TranDinh2015b}, two accelerated smoothed gap reduction algorithms are proposed to solve \eqref{eq:primal_prob} and its special constrained convex problem.
Both algorithms achieve optimal sublinear convergence rates (up to a constant factor) in the sense of black-box first-order oracles \cite{Nemirovskii1983,Nesterov2004}, when $f$ and $g$ are convex, and when $f$ is strongly convex and $g$ is just convex, respectively.
The first algorithm in  \cite{TranDinh2015b} is called ASGARD (\textit{Accelerated Smoothed GAp ReDuction}).
To the best of our knowledge, except for a special case \cite{TranDinh2012a}, ASGARD was  the first primal-dual first-order algorithm that achieves a non-asymptotic optimal convergence rate on the last primal iterate.
ASGARD is also different from the alternating direction method of multipliers (ADMM) and its variants, where it does not require solving complex subproblems but uses the proximal operators of $f$ and $g^{*}$.
However, ASGARD (i.e. \cite[Algorithm 1]{TranDinh2015b}) only covers the general convex case, and it needs only one proximal operation of $f$ and of $g^{*}$ per iteration.
To handle the strong convexity of $f$, a different variant is developed in \cite{TranDinh2015b}, called ADSGARD, but requires two proximal operations of $f$ per iteration.
Therefore, the following natural question is arising: 
\begin{itemize}
\vspace{-0.5ex}
\item[] ``\textit{Can we develop a unified variant of ASGARD that covers three settings: general convexity, strong convexity, and strong convexity and smoothness?}''
\end{itemize}
\noindent\textit{\textbf{Contribution.}}
In this paper, we affirmatively answer this question by developing a unified variant of ASGARD that covers the following three settings: 
\begin{itemize}
\item[] Case 1: Both $f$ and $g^{*}$ in \eqref{eq:saddle_point_prob} are only convex, but not strongly convex.
\item[] Case 2: Either $f$ or $g^{*}$ is strongly convex, but not both $f$ and $g^{*}$.
\item[] Case 3: Both $f$ and $g^{*}$ are strongly convex.
\end{itemize}
The new variant only requires one proximal operation of $f$ and of $g^{*}$ at each iteration as in the original ASGARD and existing primal-dual methods, e.g., in \cite{boct2020variable,Chambolle2011,Chen2013a,Condat2013,Esser2010,connor2014primal,Goldstein2013,vu2013variable}.
Our algorithm reduces to ASGARD in Case 1, but uses a different update rule for $\eta_k$ (see Step~\ref{step:i2} of Algorithm~\ref{alg:A1}) compared to ASGARD. 
In Case 2 and Case 3, our algorithm is completely new by incorporating the strong convexity parameter $\mu_f$ of $f$ and/or $\mu_{g^{*}}$ of $g^{*}$ in the parameter update rules to achieve optimal $\BigO{1/k^2}$ sublinear and $(1 - \BigO{1/\sqrt{\kappa_F}})$ linear rates, respectively, where $k$ is the iteration counter and $\kappa_F := \norms{K}^2/(\mu_f\mu_{g^{*}})$.

In terms of convergence guarantees, we establish  that, in all cases, our algorithm achieves optimal  rates on the last primal sequence $\sets{x^k}$ and an averaging dual sequence $\set{\tilde{y}^k}$.
Moreover, our convergence guarantees are on three different criteria: (i) the gap function for \eqref{eq:saddle_point_prob}, (ii) the primal objective residual $F(x^k) - F^{\star}$ for \eqref{eq:primal_prob}, and (iii) the dual objective residual $D(\tilde{y}^k) - D^{\star}$ for \eqref{eq:dual_prob}.
Our paper therefore provides a unified and full analysis on convergence rates of ASGARD for solving three problems \eqref{eq:saddle_point_prob}, \eqref{eq:primal_prob}, and \eqref{eq:dual_prob} simultaneously.

We emphasize that primal-dual first-order methods for solving \eqref{eq:saddle_point_prob}, \eqref{eq:primal_prob}, and \eqref{eq:dual_prob}, and their convergence analysis have been well studied in the literature. 
To avoid overloading this paper, we refer to our recent works \cite{TranDinh2015b,tran2019non} for a more thorough discussion and comparison between existing methods.
Hitherto, there have been numerous papers studying convergence rates of primal-dual first-order methods, including \cite{boct2020variable,Chambolle2011,chambolle2016ergodic,Chen2013a,Davis2014a,He2012,valkonen2019inertial}.
However, the best known and optimal rates are only achieved via averaging or weighted averaging sequences, which are also known as ergodic rates.
The convergence rates on the last iterate sequence are often slower and suboptimal.
Recently, the optimal convergence rates of the last iterates have been studied for primal-dual first-order methods, including \cite{TranDinh2015b,tran2019non,valkonen2019inertial}.
As pointed out in \cite{Chambolle2011,Davis2015,sabach2020faster,TranDinh2015b,tran2019non}, the last iterate convergence guarantee is very important in various applications to maintain some desirable structures of the final solutions such as sparsity, low-rankness, or sharp-edgedness in images.  
This also motivates us to develop ASGARD.

\vspace{0.75ex}
\noindent\textit{\textbf{Paper outline.}}
The rest of this paper is organized as follows.
Section~\ref{sec:preliminaries} recalls some basic concepts, states our assumptions, and characterizes the optimality condition of \eqref{eq:saddle_point_prob}. 
Section~\ref{sec:alg_scheme} presents our main results on the algorithm and its convergence analysis.
Section~\ref{sec:num_exp} provides a set of experiments to verify our theoretical results and compare our method with Nesterov's smoothing scheme in \cite{Nesterov2005c}.
Some technical proofs are deferred to the appendix.  

\beforesec
\section{Basic Concepts, Assumptions, and Optimality Condition}\label{sec:preliminaries}
\aftersec
We are working with Euclidean spaces, $\R^p$ and $\R^n$, equipped with the standard inner product $\iprods{\cdot,\cdot}$ and the Euclidean norm $\norms{\cdot}$.
We will use the Euclidean norm for the entire paper.
Given a proper, closed, and convex function $f$, we use $\dom{f}$ and $\partial{f}(x)$ to denote its domain and its subdifferential at $x$, respectively. 
We also use $\nabla{f}(x)$ for a subgradient or the gradient (if $f$ is differentiable) of $f$ at $x$.
We denote by $f^{*}(y) := \sup\set{\iprod{y, x} - f(x) : x\in\dom{f}}$ the Fenchel conjugate of $f$.
We denote by $ \mathrm{ri}(\Xc)$ the relative interior of $\Xc$. 

A function $f$ is called $M_f$-Lipschitz continuous if $\vert f(x) - f(\tilde{x})\vert \leq M_f\norms{x - \tilde{x}}$ for all $x, \tilde{x} \in \dom{f}$, where $M_f \in [0, +\infty)$ is called a Lipschitz constant.
A proper, closed, and convex function $f$ is $M_f$-Lipschitz continuous if and only if $\partial{f}(\cdot)$ is uniformly bounded by $M_f$ on $\dom{f}$.
For a smooth function $f$, we say that $f$ is $L_f$-smooth (or Lipschitz gradient) if for any $x, \tilde{x}\in\dom{f}$, we have $\Vert\nabla{f}(x) - \nabla{f}(\tilde{x})\Vert \leq L_f\norms{x - \tilde{x}}$, where $L_f \in [0, +\infty)$ is a Lipschitz constant. 
A function $f$ is called $\mu_f$-strongly convex with a strong convexity parameter $\mu_f \geq 0$ if $f(\cdot) - \frac{\mu_f}{2}\norms{\cdot}^2$ remains convex.
For a proper, closed, and convex function $f$, $\prox_{\gamma f}(x) := \argmin\big\{f(\tilde{x}) + \tfrac{1}{2\gamma}\norms{\tilde{x} - x}^2 : \tilde{x}\in\dom{f} \big\}$ is called the proximal operator of $\gamma f$, where $\gamma > 0$.

\beforesubsec
\subsection{\textbf{Basic assumptions and optimality condition}}\label{subsec:primal_dual_formulation}
\aftersubsec
In order to show the relationship between \eqref{eq:saddle_point_prob}, \eqref{eq:primal_prob} and its dual form \eqref{eq:dual_prob}, we require the following assumptions.

\begin{assumption}\label{as:A1}
The following assumptions hold for \eqref{eq:saddle_point_prob}.
\begin{itemize}
\item[$\mathrm{(a)}$] Both functions $f$ and $g$ are proper, closed, and convex on their domain.
\item[$\mathrm{(b)}$] There exists a saddle-point $z^{\star} := (x^{\star}, y^{\star})$ of $\Lc$ defined in \eqref{eq:saddle_point_prob}, i.e.:
\begin{equation}\label{eq:saddle_point}
\Lc(x^{\star}, y) \leq \Lc(x^{\star}, y^{\star}) \leq \Lc(x, y^{\star}), \quad \forall (x, y)\in\dom{f}\times\dom{g^{*}}.
\end{equation}
\item[$\mathrm{(c)}$] The Slater condition $0 \in \mathrm{ri}(\dom{g} - K\dom{f})$ holds.
\end{itemize}
\end{assumption}
Assumption~\ref{as:A1} is standard in convex-concave saddle-point settings.
Under Assumption~\ref{as:A1}, strong duality holds, i.e. $F^{\star} = \Lc(x^{\star}, y^{\star}) = -D^{\star}$.

To characterize a saddle-point of \eqref{eq:saddle_point_prob}, we define the following gap function:
\begin{equation}\label{eq:gap_func}
\Gc_{\Xc\times\Yc}(x, y) := \sup\big\{ \Lc(x, \tilde{y}) - \Lc(\tilde{x}, y) : \tilde{x}\in\Xc, \ \tilde{y}\in\Yc \big\},
\end{equation}
where $\Xc\subseteq\dom{f}$ and $\Yc\subseteq\dom{g^{*}}$ are nonempty, closed, and convex subsets such that $\Xc\times\Yc$ contains a saddle-point $(x^{\star},y^{\star})$ of \eqref{eq:saddle_point_prob}.
Clearly, we have $\Gc_{\Xc\times\Yc}(x, y) \geq 0$ for all $(x, y)\in\Xc\times\Yc$.
Moreover, if $(x^{\star}, y^{\star})$ is a saddle-point of \eqref{eq:saddle_point_prob} in $\Xc\times \Yc$, then $\Gc_{\Xc\times\Yc}(x^{\star}, y^{\star}) = 0$.

\beforesubsec
\subsection{\textbf{Smoothing technique for $g$}}\label{subsec:proximal_smoothing}
\aftersubsec
We first smooth $g$ in \eqref{eq:primal_prob} using Nesterov's smoothing technique \cite{Nesterov2005c} as
\begin{equation}\label{eq:smoothed_g}
g_{\beta}(u, \dot{y}) := \max_{y\in\R^n}\set{\iprods{u, y} - g^{*}(y) - \tfrac{\beta}{2}\norms{y  - \dot{y}}^2},
\end{equation}
where $g^{*}$ is the Fenchel conjugate of $g$, $\beta > 0$ is a smoothness parameter, and $\dot{y}$ is a given proximal center.
We denote by $\nabla_ug_{\beta}(u) = \prox_{g^{*}/\beta}(\dot{y} + \frac{1}{\beta}u)$ the gradient of $g_{\beta}$ w.r.t. $u$.

Given $g_{\beta}$ defined by \eqref{eq:smoothed_g}, we can approximate $F$ in \eqref{eq:primal_prob} by
\begin{equation}\label{eq:F_beta}
F_{\beta}(x, \dot{y}) := f(x) + g_{\beta}(Kx, \dot{y}).
\end{equation}
The following lemma provides two key inequalities to link $F_{\beta}$ to $\Lc$ and $D$.

\begin{lemma}\label{le:basic_properties}
Let $F_{\beta}$ be defined by \eqref{eq:F_beta} and $(x^{\star}, y^{\star})$ be a saddle-point of \eqref{eq:saddle_point_prob}.
Then, for any $x, \bar{x}\in\dom{f}$ and $\dot{y}, \tilde{y}, y \in\dom{g^{*}}$, we have
\begin{equation}\label{eq:key_est01}
\begin{array}{lcl}
\Lc(\bar{x}, y)  & \leq &  F_{\beta}(\bar{x}, \dot{y}) + \frac{\beta}{2}\norms{y - \dot{y}}^2, \vspace{1ex}\\
D(\tilde{y}) - D(y^{\star}) & \leq & F_{\beta}(\bar{x}, \dot{y}) - \Lc(\tilde{x}, \tilde{y}) + \frac{\beta}{2}\norms{y^{\star} - \dot{y}}^2, \quad \forall\tilde{x} \in \partial{f^{*}}(-K^{\top}\tilde{y}).
\end{array}
\end{equation}
\end{lemma}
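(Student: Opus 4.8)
The plan is to prove the two inequalities in \eqref{eq:key_est01} separately, both as direct consequences of the definition of $g_{\beta}$ in \eqref{eq:smoothed_g}, the Fenchel--Young (in)equality, and the saddle-point property \eqref{eq:saddle_point}.

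For the first inequality, I would start from the fact that $g_{\beta}(\cdot,\dot{y})$ is defined as a supremum: for \emph{every} fixed $y\in\dom{g^{*}}$ one has $g_{\beta}(K\bar{x},\dot{y})\geq \iprods{K\bar{x},y}-g^{*}(y)-\tfrac{\beta}{2}\norms{y-\dot{y}}^2$. Adding $f(\bar{x})$ to both sides and recognizing the right-hand side as $\Lc(\bar{x},y)-\tfrac{\beta}{2}\norms{y-\dot{y}}^2$ gives $F_{\beta}(\bar{x},\dot{y})\geq \Lc(\bar{x},y)-\tfrac{\beta}{2}\norms{y-\dot{y}}^2$, which is exactly the claim after rearranging.

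For the second inequality, the key observation is an identity: for any $\tilde{y}\in\dom{g^{*}}$ and any $\tilde{x}\in\partial{f^{*}}(-K^{\top}\tilde{y})$, the Fenchel--Young equality gives $f^{*}(-K^{\top}\tilde{y})+f(\tilde{x})=\iprods{-K^{\top}\tilde{y},\tilde{x}}$, hence $D(\tilde{y})=f^{*}(-K^{\top}\tilde{y})+g^{*}(\tilde{y})=-\big(f(\tilde{x})+\iprods{K\tilde{x},\tilde{y}}-g^{*}(\tilde{y})\big)=-\Lc(\tilde{x},\tilde{y})$. The same identity applied at the saddle point---where $x^{\star}\in\partial{f^{*}}(-K^{\top}y^{\star})$ by the optimality conditions for \eqref{eq:saddle_point_prob} under Assumption~\ref{as:A1}, equivalently by strong duality $D(y^{\star})=D^{\star}=-F^{\star}$---yields $D(y^{\star})=-\Lc(x^{\star},y^{\star})$. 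Subtracting, $D(\tilde{y})-D(y^{\star})=\Lc(x^{\star},y^{\star})-\Lc(\tilde{x},\tilde{y})$. Now I bound $\Lc(x^{\star},y^{\star})$: by the right inequality in \eqref{eq:saddle_point}, $\Lc(x^{\star},y^{\star})\leq\Lc(\bar{x},y^{\star})$ for every $\bar{x}\in\dom{f}$, and then by the \emph{first} inequality of the lemma applied with $y=y^{\star}$, $\Lc(\bar{x},y^{\star})\leq F_{\beta}(\bar{x},\dot{y})+\tfrac{\beta}{2}\norms{y^{\star}-\dot{y}}^2$. Chaining these estimates gives $D(\tilde{y})-D(y^{\star})\leq F_{\beta}(\bar{x},\dot{y})-\Lc(\tilde{x},\tilde{y})+\tfrac{\beta}{2}\norms{y^{\star}-\dot{y}}^2$, as desired.

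I do not expect a genuine obstacle here; the only point requiring minor care is the identity $D(y^{\star})=-\Lc(x^{\star},y^{\star})$, which relies on the fact that a saddle point of $\Lc$ satisfies the primal-dual optimality relation $-K^{\top}y^{\star}\in\partial{f}(x^{\star})$, equivalently $x^{\star}\in\partial{f^{*}}(-K^{\top}y^{\star})$. This is standard under Assumption~\ref{as:A1}, and can alternatively be replaced by invoking strong duality $F^{\star}=-D^{\star}$ together with $D(y^{\star})=D^{\star}$. Everything else is a one-line manipulation.
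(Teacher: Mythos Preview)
Your proposal is correct and follows essentially the same approach as the paper: both prove the first inequality directly from the supremum definition of $g_{\beta}$, and both prove the second by using Fenchel's equality to write $D(\tilde{y})=-\Lc(\tilde{x},\tilde{y})$, then applying the saddle-point inequality $\Lc(x^{\star},y^{\star})\leq\Lc(\bar{x},y^{\star})$ together with the first line at $y=y^{\star}$. The only cosmetic difference is that the paper invokes $-D(y^{\star})=\Lc(x^{\star},y^{\star})$ directly via strong duality rather than via the subdifferential relation $x^{\star}\in\partial f^{*}(-K^{\top}y^{\star})$, but you already note this equivalence yourself.
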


\begin{proof}
Using the definition of $\Lc$ in \eqref{eq:saddle_point_prob} and of $F_{\beta}$ in \eqref{eq:F_beta}, we have $\Lc(\bar{x}, y) =  f(\bar{x}) +  \iprods{K\bar{x}, y} - g^{*}(y) \leq f(\bar{x}) + \sup_{y}\sets{\iprods{K\bar{x}, y} - g^{*}(y) - \frac{\beta}{2}\norms{y - \dot{y}}^2}  +  \frac{\beta}{2}\norms{y - \dot{y}}^2 = F_{\beta}(\bar{x}, \dot{y}) + \frac{\beta}{2}\norms{y - \dot{y}}^2$,
which proves the first line of \eqref{eq:key_est01}. 

Next, for any $\tilde{x} \in \partial{f^{*}}(-K^{\top}\tilde{y})$, we have $f^{*}(-K^{\top}\tilde{y}) = \iprods{-K^{\top}\tilde{y}, \tilde{x}} - f(\tilde{x})$ by Fenchel's equality \cite{Bauschke2011}.
Hence, $D(\tilde{y}) = f^{*}(-K^{\top}\tilde{y}) + g^{*}(\tilde{y}) = -\Lc(\tilde{x}, \tilde{y})$.
On the other hand, by \eqref{eq:saddle_point}, we have $\Lc(\bar{x}, y^{\star}) \geq \Lc(x^{\star}, y^{\star}) = -D(y^{\star})$.
Combining these two expressions and the first line of \eqref{eq:key_est01}, we obtain $D(\tilde{y}) - D(y^{\star}) \leq \Lc(\bar{x}, y^{\star}) - \Lc(\tilde{x},\tilde{y}) \leq F_{\beta}(\bar{x}, \dot{y}) - \Lc(\tilde{x}, \tilde{y}) + \frac{\beta}{2}\norms{\dot{y} - y^{\star}}^2$.
\Eproof
\end{proof}

\beforesec
\section{New ASGARD Variant and Its Convergence Guarantees}\label{sec:alg_scheme}
\aftersec
In this section, we derive a new and unified variant of  ASGARD in \cite{TranDinh2015b} and analyze its convergence rate guarantees for three settings.

\beforesubsec
\subsection{\textbf{The derivation of algorithm and one-iteration analysis}}
\aftersubsec
Given $\dot{y} \in\R^n$ and $\hat{x}^k \in\R^p$, the main step of ASGARD consists of one primal and one dual updates as follows:
\begin{equation}\label{eq:prox_grad_step0}
\left\{\begin{array}{lcl}
y^{k+1} & := &  \prox_{g^{*}/\beta_k}\big(\dot{y} + \tfrac{1}{\beta_k}K\hat{x}^k \big), \vspace{1ex}\\
x^{k+1}           &  := & \prox_{f/L_k}\big(\hat{x}^k - \tfrac{1}{L_k}K^{\top}y^{k+1} \big),  
\end{array}\right.
\end{equation}
where $\beta_k > 0$ is the smoothness parameter of $g$, and $L_k > 0$ is an estimate of the Lipschitz constant of $\nabla{g_{\beta_k}}$.
Here, \eqref{eq:prox_grad_step0} serves as basic steps of various primal-dual first-order methods in the literature, including \cite{Chambolle2011,He2012,TranDinh2015b}.
However, instead of updating $\dot{y}$, ASGARD fixes it for all iterations.

The following lemma serves as a key step for our analysis in the sequel.
Since its statement and proof are rather different from \cite[Lemma 2]{TranDinh2015b}, we provide its proof in Appendix~\ref{apdx:le:descent_pro}. 

\begin{lemma}[\cite{TranDinh2015b}]\label{le:descent_pro}
Let $(x^{k+1}, y^{k+1})$ be generated by \eqref{eq:prox_grad_step0},  $F_{\beta}$ be defined by \eqref{eq:F_beta}, and $\Lc$ be given by \eqref{eq:saddle_point_prob}.
Then, for any $x\in\dom{f}$ and $\tau_k \in [0,1]$, we have
\begin{equation}\label{eq:key_est00_a}
\hspace{-2ex}
\arraycolsep=0.1em
\begin{array}{lcl}
F_{\beta_k}(x^{k+1},\dot{y})  & \leq & (1 - \tau_k) F_{\beta_{k-1}}(x^k, \dot{y})  + \tau_k\Lc(x, y^{k+1}) \vspace{1ex}\\
&& + {~} \frac{L_k\tau_k^2}{2} \big\Vert \tfrac{1}{\tau_k}[\hat{x}^k - (1-\tau_k)x^k] - x \big\Vert^2 - \frac{\mu_f(1-\tau_k)\tau_k}{2}\norms{x - x^k}^2 \vspace{1ex}\\
&& - {~} \frac{\tau_k^2}{2}\left( L_k + \mu_f \right) \big\Vert \tfrac{1}{\tau_k}[x^{k+1} - (1-\tau_k)x^k] - x \big\Vert^2  \vspace{1ex}\\
&& - {~} \frac{L_k}{2}\norms{x^{k+1} - \hat{x}^k}^2 + \frac{1}{2(\mu_{g^{*}}+\beta_k)}\Vert K(x^{k+1} - \hat{x}^k)\Vert^2  \vspace{1ex}\\
&&  - {~} \frac{1-\tau_k}{2}\left[ \tau_k\beta_k - (\beta_{k-1} - \beta_k)\right]\norms{ \nabla_ug_{\beta_k}(Kx^k,\dot{y})  - \dot{y}}^2.
\end{array}
\hspace{-8ex}
\end{equation}
\end{lemma}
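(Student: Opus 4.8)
The plan is to derive \eqref{eq:key_est00_a} from two ingredients: a primal inequality coming from the $x$-update in \eqref{eq:prox_grad_step0}, and a dual inequality coming from the $y$-update, each aggregated against the two points $x^k$ and $x$ with weights $1-\tau_k$ and $\tau_k$, and then added. It is convenient to set $\bar{x}^k := \tfrac{1}{\tau_k}[\hat{x}^k - (1-\tau_k)x^k]$, $\bar{x}^{k+1} := \tfrac{1}{\tau_k}[x^{k+1} - (1-\tau_k)x^k]$ (so $\hat{x}^k$ and $x^{k+1}$ are convex combinations of $x^k$ with $\bar{x}^k$, resp. $\bar{x}^{k+1}$), and $\tilde{y}^k := \nabla_u g_{\beta_k}(Kx^k,\dot{y})$; I will freely use the exact identity $\norms{(1-t)a+tb-w}^2 = (1-t)\norms{a-w}^2 + t\norms{b-w}^2 - t(1-t)\norms{a-b}^2$.

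\emph{Primal inequality.} Since $f$ is $\mu_f$-strongly convex, $x^{k+1}$ minimizes the $(L_k+\mu_f)$-strongly convex map $z\mapsto f(z) + \tfrac{L_k}{2}\norms{z - \hat{x}^k + \tfrac{1}{L_k}K^{\top}y^{k+1}}^2$; writing its strong-convexity inequality at a generic $z$, expanding the quadratics and cancelling the common $\tfrac{1}{2L_k}\norms{K^{\top}y^{k+1}}^2$ term gives $f(x^{k+1}) \le f(z) + \iprods{K^{\top}y^{k+1},z-x^{k+1}} + \tfrac{L_k}{2}\norms{z-\hat{x}^k}^2 - \tfrac{L_k}{2}\norms{x^{k+1}-\hat{x}^k}^2 - \tfrac{L_k+\mu_f}{2}\norms{z-x^{k+1}}^2$ for all $z$. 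I would take the convex combination of the cases $z=x^k$ and $z=x$ with weights $1-\tau_k,\tau_k$, use $(1-\tau_k)x^k+\tau_k x-x^{k+1}=\tau_k(x-\bar{x}^{k+1})$ and apply the convex-combination identity to $\hat{x}^k$ and $x^{k+1}$; the quadratics collapse exactly into $\tfrac{L_k\tau_k^2}{2}\norms{\bar{x}^k-x}^2$, $-\tfrac{(L_k+\mu_f)\tau_k^2}{2}\norms{\bar{x}^{k+1}-x}^2$, $-\tfrac{\mu_f(1-\tau_k)\tau_k}{2}\norms{x-x^k}^2$ and $-\tfrac{L_k}{2}\norms{x^{k+1}-\hat{x}^k}^2$ — i.e.\ the four primal terms of \eqref{eq:key_est00_a} — leaving $(1-\tau_k)f(x^k)+\tau_k f(x)$ and the single cross term $\tau_k\iprods{K^{\top}y^{k+1},x-\bar{x}^{k+1}}$.

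\emph{Dual inequality.} Because $g_{\beta_k}(\cdot,\dot{y})$ is the Fenchel conjugate of the $(\mu_{g^{*}}+\beta_k)$-strongly convex function $y\mapsto g^{*}(y)+\tfrac{\beta_k}{2}\norms{y-\dot{y}}^2$, it is $\tfrac{1}{\mu_{g^{*}}+\beta_k}$-smooth with $\nabla_u g_{\beta_k}(K\hat{x}^k,\dot{y})=y^{k+1}$ (the $y$-update of \eqref{eq:prox_grad_step0} is precisely this gradient step). The descent lemma gives $g_{\beta_k}(Kx^{k+1},\dot{y})\le g_{\beta_k}(K\hat{x}^k,\dot{y})+\iprods{y^{k+1},K(x^{k+1}-\hat{x}^k)}+\tfrac{1}{2(\mu_{g^{*}}+\beta_k)}\norms{K(x^{k+1}-\hat{x}^k)}^2$. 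I would then bound $g_{\beta_k}(K\hat{x}^k,\dot{y})$ by the convex combination, weights $\tau_k$ and $1-\tau_k$, of: (i) its exact value $\iprods{K\hat{x}^k,y^{k+1}}-g^{*}(y^{k+1})-\tfrac{\beta_k}{2}\norms{y^{k+1}-\dot{y}}^2$ (legitimate, as $y^{k+1}$ is the exact inner maximizer at $u=K\hat{x}^k$); and (ii) the tangent-line bound $g_{\beta_k}(K\hat{x}^k,\dot{y})\le g_{\beta_k}(Kx^k,\dot{y})+\iprods{y^{k+1},K(\hat{x}^k-x^k)}-\tfrac{\mu_{g^{*}}+\beta_k}{2}\norms{y^{k+1}-\tilde{y}^k}^2$, whose last term is the surplus obtained by evaluating the $(\mu_{g^{*}}+\beta_k)$-strongly concave inner objective at $u=Kx^k$ at the non-optimal point $y^{k+1}$. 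Substituting, the $\iprods{y^{k+1},K\,\cdot\,}$ contributions combine (the $\hat{x}^k$'s cancelling) to $\tau_k\iprods{Kx,y^{k+1}}+\tau_k\iprods{K^{\top}y^{k+1},\bar{x}^{k+1}-x}$, so the dual inequality reads $g_{\beta_k}(Kx^{k+1},\dot{y})\le(1-\tau_k)g_{\beta_k}(Kx^k,\dot{y})+\tau_k\iprods{Kx,y^{k+1}}+\tau_k\iprods{K^{\top}y^{k+1},\bar{x}^{k+1}-x}-\tau_k g^{*}(y^{k+1})-\tfrac{\tau_k\beta_k}{2}\norms{y^{k+1}-\dot{y}}^2-\tfrac{(1-\tau_k)(\mu_{g^{*}}+\beta_k)}{2}\norms{y^{k+1}-\tilde{y}^k}^2+\tfrac{1}{2(\mu_{g^{*}}+\beta_k)}\norms{K(x^{k+1}-\hat{x}^k)}^2$.

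\emph{Merging and closing.} Adding the two inequalities produces $F_{\beta_k}(x^{k+1},\dot{y})=f(x^{k+1})+g_{\beta_k}(Kx^{k+1},\dot{y})$ on the left; the $\tau_k\iprods{K^{\top}y^{k+1},x-\bar{x}^{k+1}}$ cross terms cancel, and $\tau_k f(x)+\tau_k\iprods{Kx,y^{k+1}}-\tau_k g^{*}(y^{k+1})=\tau_k\Lc(x,y^{k+1})$. Two final manipulations remain: (a) replace $(1-\tau_k)g_{\beta_k}(Kx^k,\dot{y})$ by $(1-\tau_k)g_{\beta_{k-1}}(Kx^k,\dot{y})+\tfrac{(1-\tau_k)(\beta_{k-1}-\beta_k)}{2}\norms{\tilde{y}^k-\dot{y}}^2$ (immediate by evaluating the $\beta_{k-1}$-inner maximum at $\tilde{y}^k$), which lets $(1-\tau_k)[f(x^k)+g_{\beta_k}(Kx^k,\dot{y})]$ become $(1-\tau_k)F_{\beta_{k-1}}(x^k,\dot{y})$; and (b) merge the three dual quadratics. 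For (b) I would invoke the elementary identity $a\norms{v-p}^2+b\norms{v-q}^2\ge\tfrac{ab}{a+b}\norms{p-q}^2$ with $a=\tfrac{\tau_k\beta_k}{2}$, $b=\tfrac{(1-\tau_k)(\mu_{g^{*}}+\beta_k)}{2}$, $v=y^{k+1}$, $p=\dot{y}$, $q=\tilde{y}^k$; since $\tfrac{ab}{a+b}\ge\tfrac{(1-\tau_k)\tau_k\beta_k}{2}$ holds exactly because $\mu_{g^{*}}\ge0$, one gets $-\tfrac{\tau_k\beta_k}{2}\norms{y^{k+1}-\dot{y}}^2-\tfrac{(1-\tau_k)(\mu_{g^{*}}+\beta_k)}{2}\norms{y^{k+1}-\tilde{y}^k}^2\le-\tfrac{(1-\tau_k)\tau_k\beta_k}{2}\norms{\tilde{y}^k-\dot{y}}^2$, and combined with the term from (a) this is precisely $-\tfrac{1-\tau_k}{2}\big[\tau_k\beta_k-(\beta_{k-1}-\beta_k)\big]\norms{\tilde{y}^k-\dot{y}}^2$. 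This yields \eqref{eq:key_est00_a}.

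I expect step (b) to be the main obstacle. The exact-value identity for $g_{\beta_k}(K\hat{x}^k,\dot{y})$ unavoidably introduces $-\tfrac{\tau_k\beta_k}{2}\norms{y^{k+1}-\dot{y}}^2$ in terms of $y^{k+1}$, whereas \eqref{eq:key_est00_a} is stated in terms of $\tilde{y}^k=\nabla_u g_{\beta_k}(Kx^k,\dot{y})$; bridging the gap forces one to carry the strong-concavity surplus $-\tfrac{\mu_{g^{*}}+\beta_k}{2}\norms{y^{k+1}-\tilde{y}^k}^2$ through the whole aggregation and then spend it, via the quadratic-averaging inequality, on exactly the right multiple of $\norms{\tilde{y}^k-\dot{y}}^2$. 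Everything else is routine algebra with the convex-combination identity and Fenchel--Young.
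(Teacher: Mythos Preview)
Your proposal is correct and follows essentially the same route as the paper's proof: both combine the strong-convexity inequality for the $x$-prox, the descent lemma for $g_{\beta_k}$, the co-coercivity bound \eqref{eq:key_bound_apdx1} at $x^k$, the $\beta_{k-1}$-vs-$\beta_k$ bound \eqref{eq:key_bound_apdx4}, and the exact representation of $g_{\beta_k}(K\hat{x}^k,\dot{y})$ at its maximizer $y^{k+1}$, then aggregate with weights $(1-\tau_k,\tau_k)$ and collapse the primal quadratics via the convex-combination identity. The only noteworthy difference is in your step~(b): the paper splits $-\tfrac{(1-\tau_k)(\mu_{g^{*}}+\beta_k)}{2}\norms{y^{k+1}-\tilde{y}^k}^2$ into a $\beta_k$-part and a $\mu_{g^{*}}$-part, applies the exact identity of Lemma~\ref{le:tech_lemma2}(b) to the $\beta_k$-part together with the other two dual quadratics, and then drops the resulting complete square $-\tfrac{\beta_k}{2}\norms{w-(1-\tau_k)z}^2$ and the leftover $\mu_{g^{*}}$-term separately; you instead keep the full $(\mu_{g^{*}}+\beta_k)$ coefficient and apply the harmonic-mean bound $a\norms{v-p}^2+b\norms{v-q}^2\ge\tfrac{ab}{a+b}\norms{p-q}^2$ in one shot, the verification $\tfrac{ab}{a+b}\ge\tfrac{(1-\tau_k)\tau_k\beta_k}{2}$ reducing precisely to $\mu_{g^{*}}\ge0$. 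Both arguments discard the same nonnegative quantity and are equivalent.
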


\noindent
Together with the primal-dual step \eqref{eq:prox_grad_step0}, we also apply Nesterov's accelerated step to $\hat{x}^k$ and an averaging step to $\tilde{y}^k$ as follows:
\begin{equation}\label{eq:prox_grad_scheme}
\left\{\begin{array}{lcl}
\hat{x}^{k+1} & := & x^{k+1} + \eta_{k+1}(x^{k+1} - x^k), \vspace{1ex}\\
\tilde{y}^{k+1} &:= & (1-\tau_k)\tilde{y}^k + \tau_ky^{k+1}, 
\end{array}\right.
\end{equation}
where $\tau_k \in (0, 1)$ and  $\eta_{k+1} > 0$ will be determined in the sequel. 

To analyze  the convergence of the new ASGARD variant, we define the following Lyapunov function (also called a potential function):
\begin{equation}\label{eq:lyapunov_func1}
\begin{array}{lcl}
\Vc_k(x)   & := &   F_{\beta_{k-1}}(x^k, \dot{y}) - \Lc(x, \tilde{y}^k)  \vspace{1ex}\\
&& + {~} \tfrac{(L_{k-1} + \mu_f)\tau_{k-1}^2}{2}\norms{\tfrac{1}{\tau_{k-1}}[x^k - (1 - \tau_{k-1})x^{k-1}] - x}^2.
\end{array}
\end{equation}
The following lemma provides a key recursive estimate to analyze the convergence of \eqref{eq:prox_grad_step0} and \eqref{eq:prox_grad_scheme}, whose proof is given in Appendix~\ref{apdx:le:maintain_gap_reduction1}.

\begin{lemma}\label{le:maintain_gap_reduction1}
Let $(x^{k}, \hat{x}^k, y^k, \tilde{y}^{k})$ be updated by \eqref{eq:prox_grad_step0} and \eqref{eq:prox_grad_scheme}.
Given $\beta_0 > 0$, $\tau_k, \tau_{k+1}\in (0, 1]$, let $\beta_k$, $L_k$, and $\eta_{k+1}$ be updated by
\vspace{-0.5ex}
\begin{equation}\label{eq:Lk_m_k}
\beta_k := \frac{\beta_{k-1}}{1+\tau_k}, \quad L_k := \frac{\norms{K}^2}{\beta_k + \mu_{g^{*}}}, \quad\text{and}\quad  \eta_{k+1} := \frac{(1-\tau_{k})\tau_{k}}{\tau_{k}^2 + m_{k+1}\tau_{k+1}},
\vspace{-0.5ex}
\end{equation}
where $m_{k+1} := \frac{L_{k+1} + \mu_f}{L_{k} + \mu_f}$.
Suppose further that $\tau_k \in (0, 1]$ satisfies
\begin{equation}\label{eq:param_cond}
\hspace{-1ex}
\arraycolsep=0.2em
\left\{\begin{array}{llcl}
&(L_{k-1} + \mu_f)(1-\tau_k)\tau_{k-1}^2 + \mu_f(1-\tau_k)\tau_k   &\geq &  L_k\tau_k^2, \vspace{1ex}\\
&(L_{k-1} + \mu_f)(L_k + \mu_f)\tau_k\tau_{k-1}^2 + (L_k + \mu_f)^2\tau_k^2 & \geq & (L_{k-1} + \mu_f)L_k\tau_{k-1}^2.
\end{array}\right.
\hspace{-1ex}
\end{equation}
Then, for any $x\in\dom{f}$, the Lyapunov function $\Vc_k$ defined by \eqref{eq:lyapunov_func1} satisfies
\begin{equation}\label{eq:key_est1_ncvx}
\Vc_{k+1}(x) \leq (1-\tau_k)\Vc_{k}(x).
\end{equation}
\end{lemma}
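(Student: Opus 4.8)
The plan is to turn the one‑iteration estimate of Lemma~\ref{le:descent_pro} into a contraction for the potential $\Vc_k$ by adding the concavity of $y\mapsto\Lc(x,y)$ and then letting the parameter choices \eqref{eq:Lk_m_k} annihilate every leftover term except a purely quadratic one, which is controlled by \eqref{eq:param_cond}. Concretely, I would first apply Lemma~\ref{le:descent_pro} with the given $x\in\dom{f}$ and $\tau_k\in(0,1]\subseteq[0,1]$ to get \eqref{eq:key_est00_a}. Since $\Lc(x,\cdot)$ is concave and $\tilde y^{k+1}=(1-\tau_k)\tilde y^k+\tau_k y^{k+1}$, Jensen's inequality gives $-\Lc(x,\tilde y^{k+1})\le -(1-\tau_k)\Lc(x,\tilde y^k)-\tau_k\Lc(x,y^{k+1})$; adding this to \eqref{eq:key_est00_a} cancels the $\tau_k\Lc(x,y^{k+1})$ terms, and then adding $\tfrac{(L_k+\mu_f)\tau_k^2}{2}\norms{\tfrac{1}{\tau_k}[x^{k+1}-(1-\tau_k)x^k]-x}^2$ to both sides converts the left side into $\Vc_{k+1}(x)$ (by \eqref{eq:lyapunov_func1} with $k\!\mapsto\!k\!+\!1$) while cancelling the matching negative term on the right. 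This yields $\Vc_{k+1}(x)\le(1-\tau_k)\big[F_{\beta_{k-1}}(x^k,\dot y)-\Lc(x,\tilde y^k)\big]+R_k$, where $R_k$ is the sum of the remaining error terms of \eqref{eq:key_est00_a}.

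Next I would clear $R_k$ term by term using \eqref{eq:Lk_m_k}: (i) from $\beta_k=\beta_{k-1}/(1+\tau_k)$ we get $\beta_{k-1}-\beta_k=\tau_k\beta_k$, so $\tau_k\beta_k-(\beta_{k-1}-\beta_k)=0$ and the last line of \eqref{eq:key_est00_a} vanishes identically; (ii) from $L_k=\norms{K}^2/(\beta_k+\mu_{g^{*}})$ and $\norms{K(x^{k+1}-\hat x^k)}^2\le\norms{K}^2\norms{x^{k+1}-\hat x^k}^2$, the pair $-\tfrac{L_k}{2}\norms{x^{k+1}-\hat x^k}^2+\tfrac{1}{2(\mu_{g^{*}}+\beta_k)}\norms{K(x^{k+1}-\hat x^k)}^2$ is $\le 0$. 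What survives is $\tfrac{L_k\tau_k^2}{2}\norms{\tfrac{1}{\tau_k}[\hat x^k-(1-\tau_k)x^k]-x}^2-\tfrac{\mu_f(1-\tau_k)\tau_k}{2}\norms{x-x^k}^2$. Writing $z^k:=\tfrac{1}{\tau_{k-1}}[x^k-(1-\tau_{k-1})x^{k-1}]$ so that $\Vc_k(x)=F_{\beta_{k-1}}(x^k,\dot y)-\Lc(x,\tilde y^k)+\tfrac{(L_{k-1}+\mu_f)\tau_{k-1}^2}{2}\norms{z^k-x}^2$, it then suffices to prove, for every $x$,
\[
L_k\tau_k^2\norms{w^k-x}^2-\mu_f(1-\tau_k)\tau_k\norms{x^k-x}^2\le(1-\tau_k)(L_{k-1}+\mu_f)\tau_{k-1}^2\norms{z^k-x}^2,
\]
where $w^k:=\tfrac{1}{\tau_k}[\hat x^k-(1-\tau_k)x^k]$; combining this with the previous display gives $\Vc_{k+1}(x)\le(1-\tau_k)\Vc_k(x)$.

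The core of the argument — and the step I expect to be the main obstacle — is this last quadratic inequality. Using the acceleration step $\hat x^k=x^k+\eta_k(x^k-x^{k-1})$ together with the identity $x^k-x^{k-1}=\tfrac{\tau_{k-1}}{1-\tau_{k-1}}(z^k-x^k)$ and the update rule for $\eta_k$ in \eqref{eq:Lk_m_k}, one finds $\hat x^k=(1-\phi_k)x^k+\phi_k z^k$ with $\phi_k:=\tfrac{\tau_{k-1}^2}{\tau_{k-1}^2+m_k\tau_k}\in(0,1)$, hence $w^k=(1-\theta_k)x^k+\theta_k z^k$ with $\theta_k:=\phi_k/\tau_k$. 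Setting $x=x^k+t(z^k-x^k)$ (the case $z^k=x^k$ being trivial) reduces the inequality to the scalar statement $p(t):=L_k\tau_k^2(\theta_k-t)^2-\mu_f(1-\tau_k)\tau_k t^2-(1-\tau_k)(L_{k-1}+\mu_f)\tau_{k-1}^2(1-t)^2\le 0$ for all $t\in\R$. The leading coefficient of $p$ is $-\big[(L_{k-1}+\mu_f)(1-\tau_k)\tau_{k-1}^2+\mu_f(1-\tau_k)\tau_k-L_k\tau_k^2\big]$, which is $\le 0$ precisely by the first inequality of \eqref{eq:param_cond}; denoting that nonnegative slack by $\epsilon_k$, a short computation (using $m_k(L_{k-1}+\mu_f)=L_k+\mu_f$ and the abbreviation $S_k:=\tau_{k-1}^2+m_k\tau_k$) gives the key identity $\delta-\alpha\theta_k=\phi_k\,\epsilon_k$ with $\alpha:=L_k\tau_k^2$, $\delta:=(1-\tau_k)(L_{k-1}+\mu_f)\tau_{k-1}^2$. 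Feeding this into the requirement that $p$ have nonpositive discriminant reduces, after unwinding $\theta_k$ and $L_k$, exactly to the second inequality of \eqref{eq:param_cond} — equivalently $(L_k+\mu_f)\tau_k S_k\ge L_k\tau_{k-1}^2$, i.e. $\theta_k\le 1+\mu_f/L_k$. (The degenerate case where the first inequality of \eqref{eq:param_cond} is an equality — so $p$ is affine in $t$ — is handled by noting that the linear coefficient of $p$ then also vanishes.) I anticipate the term‑by‑term cancellations above to be routine; the only delicate bookkeeping is this final reduction of the discriminant condition to \eqref{eq:param_cond}, which requires tracking $\theta_k$, $m_k$, $L_k$ carefully in terms of $\beta_k$.
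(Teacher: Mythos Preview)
Your proposal is correct and follows the paper's approach almost step for step: combine Lemma~\ref{le:descent_pro} with the concavity of $y\mapsto\Lc(x,y)$, kill the $\beta$-term and the $\norms{K(x^{k+1}-\hat x^k)}^2$ term using the update rules \eqref{eq:Lk_m_k}, and reduce everything to the quadratic inequality
\[
L_k\tau_k^2\norms{w^k-x}^2-\mu_f(1-\tau_k)\tau_k\norms{x^k-x}^2\le(1-\tau_k)(L_{k-1}+\mu_f)\tau_{k-1}^2\norms{z^k-x}^2.
\]
The only real difference is how this inequality is established. The paper isolates it as a separate technical lemma (Lemma~\ref{le:element_est12}) and proves it by expanding in the basis $(\hat x^k-x^k,\,x-x^k)$ to obtain $2c_1\iprods{u,v}\le c_2\norms{u}^2+c_3\norms{v}^2$, then showing that the particular $\omega_k=1/\eta_k$ chosen at the upper end of an admissible range forces $c_3=c_1$ and that \eqref{eq:param_cond} gives $c_2\ge c_1\ge 0$, whence AM--GM concludes. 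Your discriminant analysis along the line through $x^k$ and $z^k$ is an equivalent, arguably more direct, route; your identity $\delta-\alpha\theta_k=\phi_k\epsilon_k$ and the reduction of the discriminant condition to $(L_k+\mu_f)\tau_kS_k\ge L_k\tau_{k-1}^2$ do check out algebraically.

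One small point to tighten: the sentence ``setting $x=x^k+t(z^k-x^k)$ reduces the inequality to $p(t)\le 0$'' needs a one-line justification that the restriction to this line is \emph{sufficient}, not merely necessary. Decomposing $x-x^k$ into its component along $z^k-x^k$ and an orthogonal part $w$, the full inequality reads $p(t)\norms{z^k-x^k}^2+Q\norms{w}^2\le 0$ with $Q=L_k\tau_k^2-\mu_f(1-\tau_k)\tau_k-(1-\tau_k)(L_{k-1}+\mu_f)\tau_{k-1}^2$ exactly the leading coefficient of $p$; hence $p(t)\le 0$ for all $t$ already forces $Q\le 0$ and the orthogonal direction comes for free. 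With that remark added, your argument is complete.
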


\vspace{1ex}
\noindent\textbf{The unified ASGARD algorithm.}
Our next step is to expand \eqref{eq:prox_grad_step0}, \eqref{eq:prox_grad_scheme}, and \eqref{eq:Lk_m_k} algorithmically to obtain a new ASGARD variant (called ASGARD+) as presented in Algorithm~\ref{alg:A1}.

\begin{algorithm}[ht!]\caption{(New Accelerated Smoothed GAp ReDuction (ASGARD+))}\label{alg:A1}
\normalsize
\begin{algorithmic}[1]
   \State{\bfseries Initialization:} \label{step:i0a} 
   Choose $x^0 \in\dom{f}$, $\tilde{y}^0\in\dom{g^{*}}$, and $\dot{y}\in \R^n$. 
   \vspace{0.5ex}
   \State\label{step:i0b} 
   Choose $\tau_0 \in (0, 1]$ and $\beta_0 > 0$.  
   Set $L_0 := \frac{\norms{K}^2}{\mu_{g^{*}} + \beta_0}$ and $\hat{x}^0 := x^0$.
   \vspace{0.5ex}
   \State{\bfseries For $k := 0, 1, \cdots, k_{\max}$ do}
   \vspace{0.5ex}   
   \State\hspace{2ex}\label{step:i1} 
    Update $\tau_{k+1}$ as in Theorem~\ref{th:convergence1}, \ref{th:convergence_1a}, or \ref{th:convergence_1b}, and update $\beta_{k+1} := \frac{\beta_{k}}{1+\tau_{k+1}}$.
   \vspace{0.5ex}   
    \State\hspace{2ex}\label{step:i2} 
    Let $L_{k+1} := \frac{\norms{K}^2}{\mu_{g^{*}} + \beta_{k+1}}$, $m_{k+1} := \frac{L_{k+1}+\mu_f}{L_{k} + \mu_f}$, and $\eta_{k+1} :=  \frac{(1-\tau_{k})\tau_{k}}{\tau_{k}^2 + m_{k+1}\tau_{k+1}}$.
    \vspace{0.5ex}   
     \State\hspace{2ex}\label{step:i3} Update 
     \begin{equation*}
     \left\{\begin{array}{lcl}
     	y^{k+1} & := &  \prox_{g^{*}/\beta_k}\big(\dot{y} + \tfrac{1}{\beta_k}K\hat{x}^k \big), \vspace{1ex}\\
	x^{k+1}           &  := & \prox_{f/L_k}\big(\hat{x}^k - \tfrac{1}{L_k}K^{\top}y^{k+1} \big), \vspace{1ex}\\
	\hat{x}^{k+1} & := & x^{k+1} + \eta_{k+1}(x^{k+1} - x^k), \vspace{1ex}\\
        \tilde{y}^{k+1} & := & (1-\tau_k)\tilde{y}^k + \tau_ky^{k+1}.
     \end{array}\right.
     \end{equation*}
   \State{\bfseries EndFor}
\end{algorithmic}
\end{algorithm}

Compared to the original ASGARD in \cite{TranDinh2015b}, Algorithm~\ref{alg:A1} requires one additional averaging dual step on $\tilde{y}^k$ at Step~\ref{step:i3} to obtain the dual convergence.
Note that Algorithm~\ref{alg:A1} also incorporates the strong convexity parameters $\mu_f$ of $f$ and $\mu_{g^{*}}$ of $g^{*}$ to cover three settings:  general convexity ($\mu_f = \mu_{g^{*}} = 0$), strong convexity ($\mu_f > 0$ and $\mu_{g^{*}} = 0$), and strong convexity and smoothness ($\mu_f > 0$ and $\mu_{g^{*}} > 0$).
More precisely, $L_k$ and the momentum step-size $\eta_{k+1}$ are also different from \cite{TranDinh2015b} by incorporating $\mu_{g^{*}}$ and $\mu_f$.
The per-iteration complexity of ASGSARD+ remains the same as ASGARD except for the averaging dual update $\tilde{y}^k$.
However, this step is not required if we only solve \eqref{eq:primal_prob}.
We highlight that  if we apply a new approach from \cite{tran2019non} to \eqref{eq:saddle_point_prob}, then we can also update the proximal center $\dot{y}$ at each iteration.

\beforesubsec
\subsection{\textbf{Case 1: Both $f$ and $g^{*}$ are just convex ($\mu_f = \mu_{g^{*}} = 0$)}}\label{subsec:general_cvx_ccv}
\aftersubsec
The following theorem establishes convergence rates of Algorithm~\ref{alg:A1} for the  general convex case where both $f$ and $g^{*}$ are just convex.

\begin{theorem}\label{th:convergence1}
Suppose that Assumption~\ref{as:A1} holds and both $f$ and $g^{*}$ are only convex, i.e. $\mu_f = \mu_{g^{*}} = 0$.
Let $\sets{(x^k, \tilde{y}^k)}$ be generated by Algorithm~\ref{alg:A1} for solving \eqref{eq:saddle_point_prob}, where $\tau_0 := 1$ and $\tau_{k+1}$ is the unique solution of the cubic equation $\tau^3 + \tau^2 + \tau_k^2\tau - \tau_k^2 = 0$ in $\tau$, which always exists.
Then, for all $k \geq 1$, we have:
\begin{itemize}
\item[$\mathrm{(a)}$]
The gap function $\Gc_{\Xc\times\Yc}$ defined by \eqref{eq:gap_func} satisfies
\begin{equation}\label{eq:main_result1}
\Gc_{\Xc\times\Yc}(x^k, \tilde{y}^k) \leq \frac{\norms{K}^2}{2\beta_0k} \sup_{x\in\Xc}\norms{x^0 - x}^2 + \frac{\beta_0}{k+1}\sup_{y\in\Yc}\norms{y - \dot{y}}^2.
\end{equation}
\item[$\mathrm{(b)}$]
If $g$ is $M_g$-Lipschitz continuous on $\dom{g}$, then for \eqref{eq:primal_prob}, it holds that
\begin{equation}\label{eq:main_result1b}
F(x^k) - F^{\star} \leq \frac{\norms{K}^2}{2\beta_0 k}\norms{x^0 - x^{\star}}^2 + \frac{\beta_0}{k+1}(\norms{\dot{y}} + M_g)^2.
\end{equation}
\item[$\mathrm{(c)}$]
If $f^{*}$ is $M_{f^{*}}$-Lipschitz continuous on $\dom{f^{*}}$, then for \eqref{eq:dual_prob}, it holds that
\begin{equation}\label{eq:main_result1c}
D(\tilde{y}^k) - D^{\star} \leq \frac{\norms{K}^2}{2\beta_0 k}(\norms{x^0} + M_{f^{*}})^2 + \frac{\beta_0}{k+1}\norms{\dot{y} - y^{\star}}^2.
\end{equation}
\end{itemize}
\end{theorem}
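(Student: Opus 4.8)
## Proof Proposal

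The plan is to derive everything from the master recursion $\Vc_{k+1}(x) \le (1-\tau_k)\Vc_k(x)$ supplied by Lemma~\ref{le:maintain_gap_reduction1}, specialized to $\mu_f = \mu_{g^\ast} = 0$. First I would verify that the parameter choices are admissible: with $\mu_f = \mu_{g^\ast} = 0$ we have $L_k = \norms{K}^2/\beta_k$, so $m_{k+1} = L_{k+1}/L_k = \beta_k/\beta_{k+1} = 1+\tau_{k+1}$, and the two conditions in \eqref{eq:param_cond} collapse to a single cubic-type inequality in $\tau_k$. A short computation shows that equality in that condition is exactly $\tau_{k+1}^3 + \tau_{k+1}^2 + \tau_k^2\tau_{k+1} - \tau_k^2 = 0$ (after substituting $L_{k-1}/L_k = \beta_k/\beta_{k-1} = 1/(1+\tau_k)$); since the left side is strictly increasing in $\tau_{k+1}$ on $(0,1]$, negative at $0$ and positive at $1$, the root exists, is unique, and lies in $(0,1)$, so $\sets{\tau_k}$ is well-defined. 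One then needs the rate $\tau_k = \BigO{1/k}$; this follows from the standard argument that $1/\tau_{k+1} \le 1/\tau_k + c$ for a constant $c$ (here the cubic gives $\tau_k^2(1-\tau_{k+1}) = \tau_{k+1}^2(1+\tau_{k+1}) \ge \tau_{k+1}^2$, i.e. $\tau_{k+1}/\tau_k \le \sqrt{1-\tau_{k+1}} \le 1 - \tau_{k+1}/2$, hence $1/\tau_{k+1} - 1/\tau_k \ge 1/2$), which together with $\tau_0=1$ yields $\tau_k \le 2/(k+2)$, and a matching lower bound $\tau_k \ge 1/(k+1)$ from $1/\tau_{k+1} \le 1/\tau_k + 1$.

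Next I would unroll the recursion: $\Vc_{k+1}(x) \le \big(\prod_{i=1}^k(1-\tau_i)\big)\Vc_1(x)$. Since $\tau_0 = 1$, the initialization kills the quadratic memory term in $\Vc_1$, and a direct evaluation using $\hat x^0 = x^0$ gives $\Vc_1(x) = F_{\beta_0}(x^1,\dot y) - \Lc(x,\tilde y^1) + \tfrac{L_1\tau_0^2}{2}\norms{x^1 - x}^2$; applying Lemma~\ref{le:descent_pro} once more at $k=0$ bounds $F_{\beta_0}(x^1,\dot y)$ and, after telescoping the bound, the whole right side is controlled by $\tfrac{L_0\tau_0^2}{2}\norms{x^0 - x}^2 = \tfrac{\norms{K}^2}{2\beta_0}\norms{x^0-x}^2$. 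For the product, the cubic identity $\tau_i^2(1-\tau_{i+1}) = \tau_{i+1}^2(1+\tau_{i+1}) = \tau_{i+1}^2\beta_i/\beta_{i+1}$ telescopes: $\prod_{i=1}^k (1-\tau_{i+1})$ relates $\tau_{k+1}^2/\beta_{k+1}$ to $\tau_1^2/\beta_1$, so $\prod_{i=1}^{k}(1-\tau_i) = \BigO{\tau_k^2\beta_0/\beta_k}$. Combining with $\Vc_{k+1}(x) \ge F_{\beta_k}(x^{k+1},\dot y) - \Lc(x,\tilde y^{k+1})$ and invoking the first inequality of Lemma~\ref{le:basic_properties}, namely $\Lc(x^{k+1},y) \le F_{\beta_k}(x^{k+1},\dot y) + \tfrac{\beta_k}{2}\norms{y - \dot y}^2$, we get for all $\tilde x \in \Xc$, $y \in \Yc$:
\begin{equation*}
\Lc(x^{k+1}, y) - \Lc(\tilde x, \tilde y^{k+1}) \;\le\; \frac{\norms{K}^2}{2\beta_0}\cdot\frac{\tau_k^2 \beta_0}{\beta_k}\cdot\frac{\beta_k}{\norms{K}^2}\,\norms{x^0 - \tilde x}^2\cdot(\cdots) + \frac{\beta_k}{2}\norms{y - \dot y}^2,
\end{equation*}
where the bookkeeping reduces the first term to $\tfrac{\norms{K}^2}{2\beta_0 k}\norms{x^0-\tilde x}^2$ using $\tau_k^2 \le 4/(k+2)^2$ and $\beta_k = \beta_0\prod_{i=1}^k(1+\tau_i)^{-1} \asymp \beta_0 \tau_k$ (so $\beta_0/\beta_k \asymp 1/\tau_k \asymp k$), while $\beta_k \le \beta_0/(k+1)$ in the second term (since each factor $1+\tau_i > 1$, actually the precise bound $\beta_k \le 2\beta_0/(k+1)$ or similar must be tracked). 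Taking the supremum over $\tilde x \in \Xc$, $y \in \Yc$ gives part (a).

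For (b), I would use $\Gc$ with $\Xc = \set{x^\star}$ shrunk appropriately — more directly, apply part (a)'s underlying estimate with $\tilde x = x^\star$ and note $\Lc(x^{k+1}, y) - \Lc(x^\star, \tilde y^{k+1}) \ge \Lc(x^{k+1},y) - F^\star$; then choosing $y \in \partial g(Kx^{k+1})$ gives $\sup_y \Lc(x^{k+1}, y) = f(x^{k+1}) + g(Kx^{k+1}) = F(x^{k+1})$ while the Lipschitz bound $\norms{y} \le M_g$ controls $\norms{y - \dot y} \le \norms{\dot y} + M_g$, yielding \eqref{eq:main_result1b}. For (c), symmetrically, use the second inequality of Lemma~\ref{le:basic_properties}: pick $\tilde x \in \partial f^\ast(-K^\top \tilde y^{k+1})$, so $\norms{\tilde x} \le M_{f^\ast}$, and $D(\tilde y^{k+1}) - D^\star \le F_{\beta_k}(x^{k+1},\dot y) - \Lc(\tilde x, \tilde y^{k+1}) + \tfrac{\beta_k}{2}\norms{y^\star - \dot y}^2 \le \Vc_{k+1}(\tilde x) + \tfrac{\beta_k}{2}\norms{y^\star - \dot y}^2$, and the same telescoped bound on $\Vc_{k+1}(\tilde x) \le \tfrac{\norms{K}^2}{2\beta_0 k}\norms{x^0 - \tilde x}^2 \le \tfrac{\norms{K}^2}{2\beta_0 k}(\norms{x^0} + M_{f^\ast})^2$ finishes it. The main obstacle is the delicate constant-tracking in the product $\prod(1-\tau_i)$ and in relating $\beta_k$, $\tau_k$, $1/k$ precisely enough to land the stated $\tfrac{1}{k}$ and $\tfrac{1}{k+1}$ factors with the exact constants $\tfrac{\norms{K}^2}{2\beta_0}$ and $\beta_0$; everything else is a mechanical chain of the three lemmas already proved.
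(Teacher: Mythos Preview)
Your approach is correct and follows the paper's proof essentially step for step: verify that \eqref{eq:param_cond} collapses to the cubic equality, unroll $\Vc_{k+1}\le(1-\tau_k)\Vc_k$, bound $\Vc_1(x)\le \tfrac{\norms{K}^2}{2\beta_0}\norms{x^0-x}^2$ via the $k=0$ step with $\tau_0=1$, and finish with Lemma~\ref{le:basic_properties}. The constant-tracking you flag as the main obstacle is simpler than the telescoping-via-cubic route you sketch: the lower bound $\tau_i \ge \tfrac{1}{i+1}$ you already derived gives directly $\prod_{i=1}^{k-1}(1-\tau_i) \le \prod_{i=1}^{k-1}\tfrac{i}{i+1} = \tfrac{1}{k}$ and $\beta_{k-1} = \beta_0\big/\prod_{i=1}^{k-1}(1+\tau_i) \le \tfrac{2\beta_0}{k+1}$, so the stated constants $\tfrac{\norms{K}^2}{2\beta_0 k}$ and $\tfrac{\beta_0}{k+1}$ fall out immediately (also, in your expression for $\Vc_1$ the coefficient should be $L_0$, not $L_1$).
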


\begin{proof}
First, since $\mu_f = \mu_{g^{*}} = 0$, $L_k = \frac{\norms{K}^2}{\beta_k}$, and $(1+\tau_k)\beta_k = \beta_{k-1}$, the two conditions of \eqref{eq:param_cond} respectively reduce to 
\begin{equation*}
(1-\tau_k)\tau_{k-1}^2 \geq (1+\tau_k)\tau_k^2 \quad\text{and}\quad (1 + \tau_k)\tau_k^2 \geq \tau_{k-1}^2(1-\tau_k).
\end{equation*}
These conditions hold if $\tau_k^3 + \tau_k^2 + \tau_{k-1}^2\tau_k - \tau_{k-1}^2 = 0$.
We first choose $\tau_0 := 1$, and update $\tau_k$ by solving the cubic equation $\tau^3 + \tau + \tau_{k-1}^2\tau - \tau_{k-1}^2 = 0$ for $k\geq 1$. Note that this equation has a unique positive real solution $\tau_k \in (0, 1)$ due to Lemma~\ref{le:tech_lemma}(b).
Moreover, we have $\prod_{i=1}^k(1-\tau_i) \leq \frac{1}{k+1}$ and $\beta_k \leq \frac{2\beta_0}{k+2}$.

Next, by induction, \eqref{eq:key_est1_ncvx} leads to $\Vc_k(x) \leq \left[\prod_{i=1}^{k-1}(1-\tau_i)\right]\Vc_1(x) \leq \frac{1}{k}\Vc_1(x)$, where we have used $\prod_{i=1}^{k-1}(1-\tau_i) \leq \frac{1}{k}$ from Lemma~\ref{le:tech_lemma}(b).
However, from \eqref{eq:key_est00_proof1} in the proof of Lemma~\ref{le:maintain_gap_reduction1} and $\tau_0 = 1$, we have $\Vc_1(x) \leq (1-\tau_0)\Vc_0(x) + \frac{L_0\tau_0^2}{2}\norms{\hat{x}^0 - x}^2 = \frac{\norms{K}^2}{2\beta_0}\norms{x^0 - x}^2$.
Hence, we eventually obtain
\begin{equation}\label{eq:proof_th1_01}
\Vc_k(x) \leq \frac{\norms{K}^2}{2\beta_0k}\norms{x^0 - x}^2.
\end{equation}
Using \eqref{eq:key_est01}  from Lemma~\ref{le:basic_properties} and $\beta_{k-1} \leq  \frac{2\beta_0}{k+1}$ from Lemma~\ref{le:tech_lemma}(b), we get
\begin{equation*}
\begin{array}{lcl}
\Lc(x^k, y) - \Lc(x, \tilde{y}^k) & \overset{\tiny\eqref{eq:key_est01}}{\leq} & F_{\beta_{k-1}}(x^k, \dot{y}) - \Lc(x,\tilde{y}^k) + \frac{\beta_{k-1}}{2}\norms{y- \dot{y}}^2 \vspace{1ex}\\
& \overset{\tiny\eqref{eq:lyapunov_func1}}{\leq} & \Vc_k(x) + \frac{\beta_{k-1}}{2}\norms{y - \dot{y}}^2 \vspace{1ex}\\
& \overset{\tiny\eqref{eq:proof_th1_01}}{\leq} & \frac{\norms{K}^2}{2\beta_0k}\norms{x^0 - x}^2 + \frac{\beta_0}{k+1}\norms{\dot{y} - y}^2.
\end{array}
\end{equation*}
Taking the supreme  over $\Xc$ and $\Yc$ both sides of the last estimate and using \eqref{eq:gap_func}, we obtain \eqref{eq:main_result1}.

Now, since $F_{\beta_{k-1}}(x^k,\dot{y}) - F^{\star} \overset{\tiny\eqref{eq:saddle_point}}{\leq} F_{\beta_{k-1}}(x^k,\dot{y}) - \Lc(x^{\star}, \tilde{y}^k) \overset{\tiny\eqref{eq:lyapunov_func1}}{\leq} \Vc_k(x^{\star})$, combining this inequality and \eqref{eq:proof_th1_01}, we get
\begin{equation*}
F_{\beta_{k-1}}(x^k,\dot{y}) - F^{\star} \leq  \frac{\norms{K}^2}{2\beta_0k}\norms{x^0 - x^{\star}}^2.
\end{equation*}
On the other hand, since $g$ is $M_g$-Lipschitz continuous, we have 
\begin{equation*}
\sup\set{ \norms{y-\dot{y}} : y\in\partial{g}(Kx^k)} \leq \norms{\dot{y}} + \sup\sets{\norms{y} : \norms{y} \leq M_g}  = \norms{\dot{y}} + M_g.
\end{equation*}
Hence, by \eqref{eq:key_bound_apdx3} of Lemma~\ref{le:g_properties}, we have $F(x^k) \leq F_{\beta_{k-1}}(x^k, \dot{y}) + \frac{\beta_{k-1}}{2}(\norms{\dot{y}} + M_g)^2 \leq F_{\beta_{k-1}}(x^k, \dot{y}) + \frac{\beta_0}{k+1}(\norms{\dot{y}} + M_g)^2$.
Combining both estimates, we obtain \eqref{eq:main_result1b}.

Finally, using \eqref{eq:key_est01}, we have
\begin{equation*}
\begin{array}{lcl}
D(\tilde{y}^k) - D^{\star} & \leq &  F_{\beta_{k-1}}(x^k, \dot{y}) - \Lc(\tilde{x}^k, \tilde{y}^k) + \frac{\beta_{k-1}}{2}\norms{\dot{y} - y^{\star}}^2 \vspace{1ex}\\
&\overset{\tiny\eqref{eq:lyapunov_func1}}{\leq} &  \Vc_k(\tilde{x}^k) + \frac{\beta_0}{k+1}\norms{ \dot{y} - y^{\star}}^2 \vspace{1ex}\\
&\overset{\tiny\eqref{eq:proof_th1_01}}{\leq} &   \frac{\norms{K}^2}{2\beta_0k} (\norms{x^0} + M_{f^{*}})^2 + \frac{\beta_0}{k+1}\norms{\dot{y} - y^{\star}}^2,
\end{array}
\end{equation*}
which proves \eqref{eq:main_result1c}. 
Here, since $\tilde{x}^k \in \partial{f^{*}}(-K^{\top}\tilde{y}^k)$, we have $\norms{x^0 - \tilde{x}^k} \leq \norms{\tilde{x}^k} + \norms{x^0} \leq M_{f^{*}} + \norms{x^0}$, which has been used in the last inequality.
\Eproof
\end{proof}

\beforesubsec
\subsection{\textbf{Case 2: $f$ is strongly convex and $g^{*}$ is convex ($\mu_f > 0$ and $\mu_{g^{*}} = 0$)}}\label{subsect:f_or_gstar_strongly_convex}
\aftersubsec
Next, we consider the case when only $f$ or $g^{*}$ is strongly convex.
Without loss of generality, we assume that $f$ is strongly convex with a strong convexity parameter $\mu_f > 0$, but $g^{*}$ is only convex with $\mu_{g^{*}} = 0$.
Otherwise, we switch the role of $f$ and $g^{*}$ in Algorithm~\ref{alg:A1}.

The following theorem establishes an optimal $\BigO{1/k^2}$ convergence rate (up to a constant factor) of Algorithm~\ref{alg:A1} in this case (i.e. $\mu_f > 0$ and $\mu_{g^{*}} = 0$).

\begin{theorem}\label{th:convergence_1a}
Suppose that Assumption~\ref{as:A1} holds and that $f$ is strongly convex with a convexity parameter $\mu_f > 0$, but $g^{*}$ is just convex $($i.e. $\mu_{g^{*}} = 0$$)$.
Let us choose $\tau_0 := 1$ and $\beta_0  \geq \frac{0.382\norms{K}^2}{\mu_f}$.
Let $\sets{(x^k, \tilde{y}^k)}$ be generated by Algorithm~\ref{alg:A1} using the update rule $\tau_{k+1} := \frac{\tau_k}{2}\big(\sqrt{\tau_k^2 + 4} - \tau_k\big)$ for $\tau_k$.
Then, we have:
\begin{itemize}
\item[$\mathrm{(a)}$]
The gap function $\Gc_{\Xc\times\Yc}$ be defined by \eqref{eq:gap_func} satisfies
\begin{equation}\label{eq:main_est300}
\Gc_{\Xc\times\Yc}(x^k, \tilde{y}^k) \leq \frac{2\norms{K}^2}{\beta_0(k+1)^2}\sup_{x\in\Xc}\norms{x^0 -x}^2 + \frac{10\beta_0}{(k+3)^2} \sup_{y\in\Yc}\norms{\dot{y} - y}^2.
\end{equation}

\item[$\mathrm{(b)}$]
If $g$ is  $M_g$-Lipschitz continuous on $\dom{g}$, then for \eqref{eq:primal_prob}, it holds that
\begin{equation}\label{eq:main_est300b}
F(x^k) - F^{\star} \leq \frac{2\norms{K}^2\norms{x^0 - x^{\star}}^2}{\beta_0(k+1)^2} + \frac{10\beta_0(\norms{\dot{y}} + M_g)^2}{(k+3)^2}.
\end{equation}

\item[$\mathrm{(c)}$]
If $f^{*}$ is $M_{f^{*}}$-Lipschitz continuous on $\dom{f^{*}}$, then for \eqref{eq:dual_prob}, it holds that
\begin{equation}\label{eq:main_est300c}
D(\tilde{y}^k) - D^{\star} \leq \frac{2\norms{K}^2(\norms{x^0} + M_{f^{*}})^2}{\beta_0(k+1)^2} + \frac{10\beta_0\norms{\dot{y} - y^{\star}}^2}{(k+3)^2}.
\end{equation}
\end{itemize}
\end{theorem}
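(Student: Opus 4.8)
The plan is to follow the same skeleton as the proof of Theorem~\ref{th:convergence1}, but now \emph{without} discarding the strong-convexity term $\mu_f>0$. The only genuinely new task is to verify that the update $\tau_{k+1} := \tfrac{\tau_k}{2}\big(\sqrt{\tau_k^2+4}-\tau_k\big)$ keeps the parameter conditions \eqref{eq:param_cond} of Lemma~\ref{le:maintain_gap_reduction1} valid for every $k\ge 1$; it is exactly here that the hypothesis $\beta_0\ge 0.382\,\norms{K}^2/\mu_f$ enters. Once \eqref{eq:param_cond} holds, Lemma~\ref{le:maintain_gap_reduction1} gives the one-step contraction $\Vc_{k+1}(x)\le(1-\tau_k)\Vc_k(x)$, and the remaining estimates are obtained as in Theorem~\ref{th:convergence1}.

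I would first record the elementary identities of the recursion. Squaring $\tau_{k+1}=\tfrac{\tau_k}{2}(\sqrt{\tau_k^2+4}-\tau_k)$ yields the Nesterov-type relation $\tau_{k+1}^2=(1-\tau_{k+1})\tau_k^2$, equivalently $(1-\tau_k)\tau_{k-1}^2=\tau_k^2$ for $k\ge 1$, and hence $\sqrt{1-\tau_k}=\tau_k/\tau_{k-1}$. Since $\mu_{g^{*}}=0$ we have $L_k=\norms{K}^2/\beta_k$, and $\beta_k=\beta_{k-1}/(1+\tau_k)$ gives $L_k=(1+\tau_k)L_{k-1}$, i.e. $L_k-L_{k-1}=\tau_k L_{k-1}$. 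Substituting these into the first inequality of \eqref{eq:param_cond}, cancelling $(1-\tau_k)\tau_{k-1}^2=\tau_k^2$ and one factor of $\tau_k$, collapses it to $\mu_f\ge(L_k-L_{k-1})\tau_k=\tau_k^2 L_{k-1}$, that is, to the single scalar condition $\mu_f\beta_{k-1}\ge\norms{K}^2\tau_k^2$. For the second inequality of \eqref{eq:param_cond}, dividing by $(L_{k-1}+\mu_f)\tau_{k-1}^2$ and using $1-\tau_k=\tau_k^2/\tau_{k-1}^2$ reduces it to $(L_k+\mu_f)\tau_k+\tfrac{(L_k+\mu_f)^2}{L_{k-1}+\mu_f}(1-\tau_k)\ge L_k$, which holds automatically because $L_k\ge L_{k-1}$ forces $\tfrac{(L_k+\mu_f)^2}{L_{k-1}+\mu_f}\ge L_k+\mu_f$, so the left-hand side is at least $(L_k+\mu_f)\tau_k+(L_k+\mu_f)(1-\tau_k)=L_k+\mu_f\ge L_k$.

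The main obstacle is then the scalar condition $\mu_f\beta_{k-1}\ge\norms{K}^2\tau_k^2$. Writing $\beta_{k-1}=\beta_0/\prod_{i=1}^{k-1}(1+\tau_i)$, it becomes $\mu_f\beta_0\ge\norms{K}^2 P_k$ with $P_k:=\tau_k^2\prod_{i=1}^{k-1}(1+\tau_i)$, so it suffices to show that $P_k$ is non-increasing. Here $P_k/P_{k-1}=\tfrac{\tau_k^2}{\tau_{k-1}^2}(1+\tau_{k-1})=(1-\tau_k)(1+\tau_{k-1})$, so $P_k\le P_{k-1}$ is equivalent to $\tau_{k-1}-\tau_k\le\tau_k\tau_{k-1}$, i.e. $\tfrac{1}{\tau_k}-\tfrac{1}{\tau_{k-1}}\le 1$; and from $\tfrac{1}{\tau_k}-\tfrac{1}{\tau_{k-1}}=\tfrac{1}{\tau_{k-1}}\cdot\tfrac{1-\sqrt{1-\tau_k}}{\sqrt{1-\tau_k}}$ together with $1-\sqrt{1-\tau_k}\le\tau_k$ and $\sqrt{1-\tau_k}=\tau_k/\tau_{k-1}$ one gets exactly $\tfrac{1}{\tau_k}-\tfrac{1}{\tau_{k-1}}\le 1$. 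Hence $P_k\le P_1=\tau_1^2=\tfrac{3-\sqrt5}{2}\le 0.382$ (since $\tau_0=1$ forces $\tau_1^2+\tau_1-1=0$), and the hypothesis $\mu_f\beta_0\ge 0.382\,\norms{K}^2$ closes the verification of \eqref{eq:param_cond}. Then by induction $\Vc_k(x)\le\big[\prod_{i=1}^{k-1}(1-\tau_i)\big]\Vc_1(x)$, where $1-\tau_i=\tau_i^2/\tau_{i-1}^2$ telescopes to $\prod_{i=1}^{k-1}(1-\tau_i)=\tau_{k-1}^2$, and, since $\tau_0=1$ kills all the $\mu_f(1-\tau_0)$ terms, $\Vc_1(x)\le\tfrac{\norms{K}^2}{2\beta_0}\norms{x^0-x}^2$ exactly as in Theorem~\ref{th:convergence1}. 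Plugging in the bounds $\tau_{k-1}^2\le\tfrac{4}{(k+1)^2}$ and $\beta_{k-1}\le\tfrac{20\beta_0}{(k+3)^2}$ from Lemma~\ref{le:tech_lemma} gives $\Vc_k(x)\le\tfrac{2\norms{K}^2}{\beta_0(k+1)^2}\norms{x^0-x}^2$ and $\tfrac{\beta_{k-1}}{2}\le\tfrac{10\beta_0}{(k+3)^2}$.

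Finally, parts (a)--(c) follow exactly as in Theorem~\ref{th:convergence1} by combining this bound on $\Vc_k$ with Lemma~\ref{le:basic_properties}: (a) from $\Lc(x^k,y)-\Lc(x,\tilde y^k)\le\Vc_k(x)+\tfrac{\beta_{k-1}}{2}\norms{y-\dot y}^2$ followed by taking suprema over $\Xc\times\Yc$; (b) from $F_{\beta_{k-1}}(x^k,\dot y)-F^{\star}\le\Vc_k(x^{\star})$ together with $F(x^k)\le F_{\beta_{k-1}}(x^k,\dot y)+\tfrac{\beta_{k-1}}{2}(\norms{\dot y}+M_g)^2$ via the $M_g$-Lipschitz continuity of $g$ (cf. Lemma~\ref{le:g_properties}); and (c) from the second line of \eqref{eq:key_est01} with $\tilde x^k\in\partial f^{*}(-K^{\top}\tilde y^k)$ and $\norms{x^0-\tilde x^k}\le\norms{x^0}+M_{f^{*}}$. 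I expect only bookkeeping in this last step; the heart of the argument is the reduction of \eqref{eq:param_cond} to $\mu_f\beta_0\ge 0.382\,\norms{K}^2$ and the monotonicity of $P_k$.
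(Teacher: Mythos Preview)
Your proof is correct and follows essentially the same route as the paper: reduce the first line of \eqref{eq:param_cond} to the scalar condition $\mu_f\beta_{k-1}\ge\norms{K}^2\tau_k^2$, observe that the second line is automatic from $L_k\ge L_{k-1}$, and then combine the Lyapunov contraction from Lemma~\ref{le:maintain_gap_reduction1} with the bounds $\Theta_{1,k-1}\le 4/(k+1)^2$ and $\beta_{k-1}\le 20\beta_0/(k+3)^2$ of Lemma~\ref{le:tech_lemma}(a). Your monotonicity argument for $P_k$ is precisely the inequality $(1-\tau_k)(1+\tau_{k-1})\le 1$ that Lemma~\ref{le:tech_lemma}(a) already records (the paper simply cites $\beta_{k-1}\ge\beta_0\tau_k^2/\tau_1^2$ from that lemma), so you are re-deriving a piece of the lemma rather than invoking it; otherwise the arguments coincide.
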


\begin{proof}
Since $\mu_{g^{*}} = 0$ and $\tau_k^2 = \tau_{k-1}^2(1 - \tau_k)$ by the update rule of $\tau_k$, $\beta_k = \frac{\beta_{k-1}}{1+\tau_k}$, and $L_k = \frac{\norms{K}^2}{\beta_k}$, 
the first condition of \eqref{eq:param_cond} is equivalent to $\beta_{k-1} \geq \frac{\norms{K}^2\tau_k^2}{\mu_f}$.
However, since $\beta_{k-1}  \geq \frac{\beta_0\tau_{k}^2}{\tau_1^2}$ due to Lemma~\ref{le:tech_lemma}(a), and $\tau_1 = 0.6180$, $\beta_{k-1} \geq \frac{\norms{K}^2\tau_k^2}{\mu_f}$ holds if $\beta_0 \geq \frac{\norms{K}^2\tau_1^2}{\mu_f} = \frac{0.382\norms{K}^2}{\mu_f}$.
Thus we can choose $\beta_0 \geq  \frac{0.382\norms{K}^2}{\mu_f}$ to guarantee the  first condition of \eqref{eq:param_cond}.

Similarly, using $m_k = \frac{L_k + \mu_f}{L_{k-1} + \mu_f} \geq 1$, the second condition of \eqref{eq:param_cond} is equivalent to 
$m_k^2\tau_k^2 + m_k\tau_k\tau_{k-1}^2 \geq \frac{L_k\tau_{k-1}^2}{L_{k-1} + \mu_f}$.
Since $\frac{L_k}{L_{k-1} + \mu_f} \leq m_k$, the last condition holds if $m_k^2\tau_k^2 + m_k\tau_k\tau_{k-1}^2 \geq m_k\tau_{k-1}^2$.
Using again $\tau_k^2 = \tau_{k-1}^2(1-\tau_k)$, this condition becomes $m_k\tau_k^2 \geq \tau_{k-1}^2(1-\tau_k) = \tau_k^2$.
This always holds true since $m_k \geq 1$.
Therefore,  the second condition of \eqref{eq:param_cond} is satisfied.

As a result, we have the recursive estimate \eqref{eq:key_est1_ncvx}, i.e.:
\begin{equation}\label{eq:recursive_form}
\Vc_{k+1}(x) \leq (1-\tau_k)\Vc_{k}(x).
\end{equation}
From \eqref{eq:recursive_form}, Lemma~\ref{le:tech_lemma}(a), \eqref{eq:key_est00_proof1}, and noting that $\tilde{x}^0 := x^0$ and $\tau_0 := 1$, we have 
\begin{equation*}
\arraycolsep=0.2em
\begin{array}{lcl}
\Vc_k(x) & \leq & \big[\prod_{i=1}^{k-1}(1-\tau_i)\big]\Vc_1(x) = \Theta_{1,k-1}\Vc_1(x) \leq \frac{4}{(k+1)^2}\Rc_0(x).
\end{array}
\end{equation*}
where $\Rc_0(x) :=  \frac{\Vert K\Vert^2}{2\beta_0}\Vert x^0 - x\Vert^2$.
Similar to the proof of Theorem~\ref{th:convergence1}, using the last inequality and $\beta_{k-1} \leq \frac{4\beta_0\tau_0^2}{\tau_2^2[\tau_0(k+1) + 2]^2} \leq \frac{20\beta_0}{(k+3)^2}$ from Lemma~\ref{le:tech_lemma}(a), we obtain the bounds \eqref{eq:main_est300}, \eqref{eq:main_est300b}, and  \eqref{eq:main_est300c} respectively.
\Eproof
\end{proof}

\begin{remark}\label{re:compared}
The variant of Algorithm~\ref{alg:A1} in Theorem~\ref{th:convergence_1a} is completely different from \cite[Algorithm 2]{TranDinh2015b} and \cite[Algorithm 2]{tran2019non}, where it requires only one $\prox_{f/L_k}(\cdot)$ as opposed to two proximal operations  of $f$ as in \cite{TranDinh2015b,tran2019non}.
\end{remark}

\beforesubsec
\subsection{\textbf{Case 3: Both $f$ and $g^{*}$ are strongly convex ($\mu_f > 0$  and $\mu_{g^{*}} > 0$)}}\label{sec:both_f_gstar_scvx}
\aftersubsec
Finally, we assume that both $f$ and $g^{*}$ are strongly convex with strong convexity parameters $\mu_f > 0$ and $\mu_{g^{*}} > 0$, respectively.
Then, the following theorem proves the optimal linear rate (up to a constant factor) of Algorithm~\ref{alg:A1}.

\begin{theorem}\label{th:convergence_1b}
Suppose that Assumption~\ref{as:A1} holds and both $f$ and $g^{*}$ in \eqref{eq:saddle_point_prob} are strongly convex with $\mu_f > 0$ and $\mu_{g^{*}} > 0$, respectively.
Let $\sets{(x^k, \tilde{y}^k)}$ be generated by Algorithm~\ref{alg:A1} using $\tau_k := \tau = \frac{1}{\sqrt{1 + \kappa_F}}\in (0, 1)$ and $\beta_0 > 0$, where $\kappa_F := \frac{\norms{K}^2}{\mu_f\mu_{g^{*}}}$.
Then, the following statements hold:
\begin{itemize}
\item[$\mathrm{(a)}$]
The gap function $\Gc_{\Xc\times\Yc}$ defined by \eqref{eq:gap_func} satisfies
\begin{equation}\label{eq:key_est40}
 \Gc_{\Xc\times\Yc}(x^k,\tilde{y}^k) \leq  (1-\tau)^k\bar{\Rc}_p + \frac{1}{2(1+\tau)^k}\sup_{y\in\Yc}\norms{\dot{y} - y}^2,
 \vspace{-0.5ex}
\end{equation}
where $\bar{\Rc}_p := {\displaystyle\sup_{x\in\Xc}}\Big\{ (1-\tau)\big[\Fc_{\beta_0}(x^0) - \Lc(x, \tilde{y}^0)\big] + \frac{\norms{K}^2\tau^2}{2(\mu_{g^{*}} + \beta_{0})} \norms{x^0 - x}^2 \Big\}$.

\item[$\mathrm{(b)}$]
If $g$ is $M_g$-Lipschitz continuous on $\dom{g}$, then for \eqref{eq:primal_prob}, it holds that
\begin{equation}\label{eq:key_est40b}
F(x^k) - F^{\star} \leq (1-\tau)^k\bar{\Rc}_p^{\star} \ +  \ \frac{\beta_0}{2(1+\tau)^k} \left(\norms{\dot{y}} + M_g\right)^2,
\end{equation}
where $\bar{\Rc}_p^{\star} := (1-\tau)\big[ \Fc_{\beta_0}(x^0) - \Lc(x^{\star}, \tilde{y}^0) \big] +  \frac{\norms{K}^2\tau^2}{2(\mu_{g^{*}} + \beta_{0})}\norms{x^0 - x^{\star}}^2$.

\item[$\mathrm{(d)}$]
If $f^{*}$ is $M_{f^{*}}$-Lipschitz continuous on $\dom{f^{*}}$, then for \eqref{eq:dual_prob}, it holds that
\begin{equation}\label{eq:key_est40c}
D(\tilde{y}^k) - D^{\star} \leq (1-\tau)^k\bar{\Rc}_d^{\star} \ + \ \frac{\beta_0\norms{\dot{y} - y^{\star}}^2}{2(1+\tau)^k},
\end{equation}
where $\bar{\Rc}_d^{\star} := (1-\tau)\big[\Fc_{\beta_0}(x^0) - D(\tilde{y}^0)\big] +  \frac{\norms{K}^2\tau^2}{2(\mu_{g^{*}} + \beta_{0})}\left(\norms{x^0} + M_{f^{*}}\right)^2$.
\end{itemize}
\end{theorem}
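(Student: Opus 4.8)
The plan is to mirror the three-stage structure of the proof of Theorem~\ref{th:convergence1}, now with a constant stepsize $\tau_k\equiv\tau=(1+\kappa_F)^{-1/2}$: first verify that this choice satisfies the parameter conditions \eqref{eq:param_cond}; then telescope the recursion \eqref{eq:key_est1_ncvx} into a geometric decay of the Lyapunov function $\Vc_k$; and finally convert this decay into the three advertised bounds via Lemma~\ref{le:basic_properties}, exactly as in Case~1. Throughout, $\beta_k=\beta_{k-1}/(1+\tau)$ gives $\beta_k=\beta_0(1+\tau)^{-k}$ (so $\beta_{k-1}=(1+\tau)\beta_k$), and $L_k=\norms{K}^2/(\mu_{g^{*}}+\beta_k)$ is nondecreasing in $k$ with $L_k\le\norms{K}^2/\mu_{g^{*}}=\mu_f\kappa_F$, hence $L_k+\mu_f\le\mu_f(1+\kappa_F)=\mu_f/\tau^2$.

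For the parameter verification, substitute $\tau_{k-1}=\tau_k=\tau$ into the first inequality of \eqref{eq:param_cond} and divide by $\tau^2$: it becomes $(1-\tau)L_{k-1}+\mu_f\tfrac{1-\tau^2}{\tau}\ge L_k$. Using the identities $\tfrac{1-\tau^2}{\tau}=\kappa_F\tau$ (from $\tau^2=1/(1+\kappa_F)$) and $\mu_f\kappa_F=\norms{K}^2/\mu_{g^{*}}$, this reads $(1-\tau)L_{k-1}+\tfrac{\norms{K}^2\tau}{\mu_{g^{*}}}\ge L_k$; inserting the formulas for $L_{k-1},L_k$, multiplying out the positive denominators $\mu_{g^{*}}(\mu_{g^{*}}+\beta_k)(\mu_{g^{*}}+(1+\tau)\beta_k)$, and simplifying collapses the inequality to $\tau\beta_k\big(\mu_{g^{*}}\tau+(1+\tau)\beta_k\big)\ge0$, which holds for every $\beta_0>0$ --- so, unlike Case~2, no lower bound on $\beta_0$ is imposed. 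For the second inequality of \eqref{eq:param_cond}, dividing by $\tau^2(L_{k-1}+\mu_f)>0$ turns it into $(L_k+\mu_f)\tau+\tfrac{(L_k+\mu_f)^2}{L_{k-1}+\mu_f}\ge L_k$, which is immediate from $L_k+\mu_f\ge L_{k-1}+\mu_f$ (the second summand alone already exceeds $L_k$). Hence Lemma~\ref{le:maintain_gap_reduction1} applies at every $k\ge1$, yielding $\Vc_{k+1}(x)\le(1-\tau)\Vc_k(x)$.

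Telescoping this gives $\Vc_k(x)\le(1-\tau)^{k-1}\Vc_1(x)$, and the one-iteration estimate \eqref{eq:key_est00_proof1} (from the proof of Lemma~\ref{le:maintain_gap_reduction1}) at $k=0$, where $\hat{x}^0=x^0$ and $\tau_0=\tau$, bounds $\Vc_1(x)\le(1-\tau)\big[F_{\beta_0}(x^0,\dot{y})-\Lc(x,\tilde{y}^0)\big]+\tfrac{\norms{K}^2\tau^2}{2(\mu_{g^{*}}+\beta_0)}\norms{x^0-x}^2$, i.e. $\Vc_1(x)\le\bar{\Rc}_p(x)$ whose supremum over $\Xc$ is $\bar{\Rc}_p$ (and whose values at $x^{\star}$ and at $\tilde{x}^k\in\partial{f^{*}}(-K^{\top}\tilde{y}^k)$ are $\bar{\Rc}_p^{\star}$ and $\bar{\Rc}_d^{\star}$). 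Thus $\Vc_k(x)\le(1-\tau)^{k-1}\bar{\Rc}_p(x)$ (the exponent $k$ versus $k-1$ relative to \eqref{eq:key_est40}--\eqref{eq:key_est40c} is absorbed into the constants). The conversion step is then verbatim the end of the proof of Theorem~\ref{th:convergence1}, only with $\beta_{k-1}=\beta_0(1+\tau)^{-(k-1)}$ in place of the $\BigO{1/k}$ bound on $\beta_{k-1}$: for (a), \eqref{eq:key_est01} and dropping the nonnegative quadratic term of $\Vc_k$ give $\Lc(x^k,y)-\Lc(x,\tilde{y}^k)\le\Vc_k(x)+\tfrac{\beta_{k-1}}{2}\norms{\dot{y}-y}^2$, and taking $\sup_{\Xc\times\Yc}$ yields \eqref{eq:key_est40}; for (b), $F_{\beta_{k-1}}(x^k,\dot{y})-F^{\star}\le\Vc_k(x^{\star})$ by \eqref{eq:saddle_point}, while $M_g$-Lipschitz continuity of $g$ gives $F(x^k)\le F_{\beta_{k-1}}(x^k,\dot{y})+\tfrac{\beta_{k-1}}{2}(\norms{\dot{y}}+M_g)^2$ via Lemma~\ref{le:g_properties}, proving \eqref{eq:key_est40b}; for (d), \eqref{eq:key_est01} gives $D(\tilde{y}^k)-D^{\star}\le\Vc_k(\tilde{x}^k)+\tfrac{\beta_{k-1}}{2}\norms{\dot{y}-y^{\star}}^2$, and $\norms{x^0-\tilde{x}^k}\le\norms{x^0}+M_{f^{*}}$ by $M_{f^{*}}$-Lipschitz continuity of $f^{*}$, proving \eqref{eq:key_est40c}.

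The only genuine computation is the first inequality of \eqref{eq:param_cond}: with $\mu_{g^{*}}>0$ it couples $L_{k-1}$ and $L_k$ nonlinearly, and its verification rests entirely on the three identities $\beta_{k-1}=(1+\tau)\beta_k$, $\tfrac{1-\tau^2}{\tau}=\kappa_F\tau$, and $\mu_f\kappa_F=\norms{K}^2/\mu_{g^{*}}$, after which it reduces to the manifestly nonnegative expression above; the second inequality (consequence of $L_k$ nondecreasing) and the entire conversion stage (identical to Case~1) are routine bookkeeping, so I expect no further obstacle there.
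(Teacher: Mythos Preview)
Your proposal is correct and follows essentially the same approach as the paper. The paper's proof is terser---it dispatches the first inequality of \eqref{eq:param_cond} with the single sentence ``after a few elementary calculations, we can show that the first condition automatically holds,'' whereas you carry out those calculations explicitly and arrive at the nonnegative remainder $\tau\beta_k(\mu_{g^{*}}\tau+(1+\tau)\beta_k)$; for the second inequality the paper divides through by $(L_{k-1}+\mu_f)^2$ to work with $m_k$ while you divide by $(L_{k-1}+\mu_f)$, but both reduce to the observation that $m_k\ge1$ since $L_k\ge L_{k-1}$. Your telescoping and conversion steps are identical to the paper's, and your remark on the exponent $k-1$ versus $k$ correctly identifies a harmless off-by-one in the stated bounds.
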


\begin{proof}
Since $\tau_k = \tau = \frac{1}{\sqrt{1+\kappa_F}} = \sqrt{\frac{\mu_f\mu_{g^{*}}}{\norms{K}^2 + \mu_f\mu_{g^{*}}}} \in (0, 1)$ and $\beta_{k-1} = (1+\tau)\beta_k$, after a few elementary calculations, we can show that the first condition of \eqref{eq:param_cond} automatically holds.
The second condition of \eqref{eq:param_cond} is equivalent to $m_k\tau + m_k^2 \geq \frac{L_k}{L_{k-1} + \mu_f}$.
Since $m_k \geq \frac{L_k}{L_{k-1} + \mu_f}$, this condition holds if $m_k\tau + m_k^2 \geq m_k$, which is equivalent to $\tau + m_k \geq 1$. This obviously holds true since $\tau > 0$ and $m_k \geq 1$.

From \eqref{eq:key_est1_ncvx} of Lemma~\ref{le:maintain_gap_reduction1}, we have $\Vc_{k+1}(x) \leq (1-\tau)\Vc_{k}(x)$. 
Therefore, by induction and using again \eqref{eq:key_est00_proof1}, we get
\begin{equation}\label{eq:recursive_form}
\Vc_k(x) \leq  (1-\tau)^k\Vc_1(x) \leq  (1-\tau)^k\bar{\Rc}_0(x).
\end{equation}
where $\bar{\Rc}_p(x) :=  (1-\tau)\big[ \Fc_{\beta_0}(x^0) - \Lc(x,\tilde{y}^0)\big] + \frac{\norms{K}^2\tau^2}{2(\mu_{g^{*}} + \beta_{0})}\norms{x^0 - x}^2$.

Now, since $\beta_{k-1} = \frac{\beta_0}{(1+\tau)^k}$ due to the update rule of $\beta_k$, by \eqref{eq:key_est01}, we have
\begin{equation*}
\Lc(x^k, y) - \Lc(x, \tilde{y}^k) \leq  \Vc_k(x) + \frac{\beta_{k-1}}{2}\norms{\dot{y} - y}^2  \leq  (1-\tau)^k\bar{\Rc}_p(x) + \frac{\norms{\dot{y} - y}^2}{2(1+\tau)^k}.
\end{equation*}
This implies \eqref{eq:key_est40}.
The estimates \eqref{eq:key_est40b} and \eqref{eq:key_est40c} can be proved similarly as in Theorem~\ref{th:convergence1}, and we omit the details here.
\Eproof
\end{proof}

\begin{remark}\label{re:cond_number}
Since $g^{*}$ is $\mu_{g^{*}}$-strongly convex, it is well-known that $g\circ K$ is $\frac{\norms{K}^2}{\mu_{g^{*}}}$-smooth.
Hence, $\kappa_F := \frac{\norms{K}^2}{\mu_f\mu_{g^{*}}}$ is the condition number of $F$ in \eqref{eq:primal_prob}.
Theorem shows that Algorithm~\ref{alg:A1} can achieve a $\big(1 - \frac{1}{(1+\sqrt{2})\sqrt{\kappa_F}}\big)^k$-linear convergence rate.
Consequently, it also achieves $\BigO{ \sqrt{\kappa_F}\log\left(\frac{1}{\varepsilon}\right)}$ oracle complexity to obtain an $\varepsilon$-primal-dual solution $(x^k,\tilde{y}^k)$.
This linear rate and complexity are optimal (up to a constant factor) under the given assumptions in Theorem~\ref{th:convergence_1b}.
However, Algorithm~\ref{alg:A1} is very different from existing accelerated proximal gradient methods, e.g., \cite{Beck2009,Nesterov2007,tseng2008accelerated} for solving \eqref{eq:primal_prob} since our method uses the proximal operator of $g^{*}$ (and therefore, the proximal operator of $g$) instead of the gradient of $g$ as in \cite{Beck2009,Nesterov2007,tseng2008accelerated}.
\end{remark}

\begin{remark}\label{re:optimal_rate}
The $\BigO{1/k}$, $\BigO{1/k^2}$, and linear convergence rates in Theorem~\ref{th:convergence1}, \ref{th:convergence_1a}, and \ref{th:convergence_1b}, respectively are already optimal (up to a constant factor) under given assumptions as discussed, e.g., in \cite{ouyang2018lower,tran2019non}.
The primal convergence rate on $\sets{F(x^k) - F^{\star}}$ has been proved in \cite[Theorem~4]{TranDinh2015b}, but only for the case $\BigO{1/k}$.
The convergence rates on $\sets{\Gc_{\Xc\times\Yc}(x^k,\tilde{y}^k)}$ and $\sets{D(\tilde{y}^k) - D^{\star}}$ are new.
Moreover, the convergence of the primal sequence is on the last iterate $x^k$, while the convergence of the dual sequence is on the averaging iterate $\tilde{y}^k$.
\end{remark}

\beforesec
\section{Numerical Experiments}\label{sec:num_exp}
\aftersec
In this section, we provide four numerical experiments to verify the theoretical convergence aspects and the performance of Algorithm~\ref{alg:A1}. 
Our algorithm is implemented in Matlab R.2019b running on a MacBook Laptop with 2.8GHz Quad-Core Intel Core i7 and 16GB RAM.
We also compare our method with Nesterov's smoothing technique in \cite{Nesterov2005c} as a baseline.
We emphasize that our experiments bellow follow exactly the parameter update rules as stated in Theorem~\ref{th:convergence1} and Theorem~\ref{th:convergence_1a} without any parameter tuning trick.
To further improve practical performance of Algorithm~\ref{alg:A1}, one can exploit the restarting strategy in \cite{TranDinh2015b}, where its theoretical guarantee is established in \cite{tran2018adaptive}.

The nonsmooth and convex optimization problem we use for our experiments is the following representative model:
\begin{equation}\label{eq:LAD}
\min_{x\in\R^p}\Big\{ F(x) := \Vert Kx - b\Vert_2 + \lambda\norms{x}_1 + \frac{\rho}{2}\norms{x}_2^2 \Big\},
\end{equation}
where $K\in\R^{n\times p}$ is a given matrix, $b\in\R^n$ is also given, and $\lambda > 0$ and $\rho \geq 0$ are two given regularization parameters.
The norm $\norms{\cdot}_2$ is the $\ell_2$-norm (or Euclidean norm).
If $\rho = 0$, then \eqref{eq:LAD} reduces to the square-root LASSO model proposed in \cite{Belloni2011}.
If $\rho > 0$, then \eqref{eq:LAD} becomes a square-root regression problem with elastic net regularizer similar to \cite{zou2005regularization}.
Clearly, if we define $g(y) := \norms{y - b}_2$ and $f(x) := \lambda\norms{x}_1 + \frac{\rho}{2}\norms{x}_2^2$, then \eqref{eq:LAD} can be rewritten into \eqref{eq:primal_prob}.

To generate the input data for our experiments, we first generate $K$ from standard i.i.d. Gaussian distributions with either uncorrelated or $50\%$ correlated columns.
Then, we generate an observed vector $b$ as $b := Kx^{\natural} + \mathcal{N}(0,\sigma)$, where $x^{\natural}$ is a predefined sparse vector and $\mathcal{N}(0, \sigma)$ stands for standard Gaussian noise with variance $\sigma = 0.05$.
The regularization parameter $\lambda$ to promote sparsity is chosen as suggested in \cite{Belloni2011}, and the parameter $\rho$ is set to $\rho := 0.1$.
We first fix the size of problem at $p := 1000$ and $n := 350$ and choose the number of nonzero entries of $x^{\natural}$ to be $s := 100$.
Then, for each experiment, we generate $30$ instances of the same size but with different input data $(K, b)$.

For Nesterov's smoothing method, by following \cite{Nesterov2005c}, we smooth $g$ as 
\begin{equation*}
g_{\gamma}(y) := \max_{v\in\R^n}\Big\{ \iprods{y - b, v} - \frac{\gamma}{2}\norms{v}^2: \norms{v}_2 \leq 1 \Big\},
\end{equation*}
where $\gamma > 0$ is a smoothness parameter.
In order to correctly choose $\gamma$ for Nesterov's smoothing method, we first solve \eqref{eq:LAD} with CVX \cite{Grant2004} using Mosek with high precision to get a high accurate solution $x^{\star}$ of \eqref{eq:LAD}. 
Then, we set $\gamma^{*} := \frac{\sqrt{2}\norms{K}\norms{x^0 - x^{\star}}}{k_{\max}\sqrt{D_{\mathcal{V}}}}$ by minimizing its theoretical bound from \cite{Nesterov2005c} w.r.t. $\gamma > 0$, where $D_{\mathcal{V}} := \frac{1}{2}$ is the prox-diameter of the unit $\ell_2$-norm ball, and $k_{\max}$ is the maximum number of iterations.
For Algorithm~\ref{alg:A1}, using \eqref{eq:main_result1b} and $\dot{y} := 0$, we can set $\beta_0 = \beta^{*} := \frac{\norms{K}\norms{x^0 - x^{*}}}{M_g}$ by minimizing the right-hand side of \eqref{eq:main_result1b} w.r.t. $\beta_0 > 0$, where $M_g := 1$.
We choose $k_{\max} := 5000$ for all experiments.
To see the effect of the smoothness parameters $\gamma$ and $\beta_0$ on the performance of both algorithms, we also consider two variants by increasing or decreasing these parameters $10$ times, respectively. More specifically, we set them as follows.
\begin{itemize}
\itemsep=0.1em
\item For Nesterov's smoothing scheme, we consider two additional variants by setting $\gamma := 10\gamma^{*}$ and $\gamma = 0.1\gamma^{*}$, respectively.
\item For Algorithm~\ref{alg:A1}, we also consider two other variants with $\beta_0 := 10\beta^{*}$ and $\beta_0 := 0.1\beta^{*}$, respectively.
\end{itemize}
We first conduct two different experiments for the square-root LASSO model (i.e. setting $\rho := 0$ in \eqref{eq:LAD}).
In this case, the underlying optimization problem is non-strongly convex and fully nonsmooth. 
\begin{itemize}
\itemsep=0.1em
\item \textit{Experiment 1:}
We test Algorithm \ref{alg:A1} (abbreviated by \texttt{Alg. 1}) and Nesterov's smoothing method (abbreviated by \texttt{Nes. Alg.}) on $30$ problem instances with uncorrelated columns of $K$.
Since both algorithms have essentially the same per-iteration complexity, we report the relative primal objective residual $\frac{F(x^k) - F(x^{\star})}{\max\sets{1, \vert F(x^{\star})\vert}}$ against the number of iterations.
\item \textit{Experiment 2:} We conduct the same test on another set of $30$ problem instances, but using $50\%$ correlated columns in the input matrix $K$.
\end{itemize}
The results of both experiments are depicted in Figure~\ref{fig:sqrtLASSO_exp1}, where the left plot is for \textit{Experiment 1} and the right plot is for \textit{Experiment 2}.
The solid line of each curve presents the mean over $30$ problem samples, and the corresponding shaded area represents the sample variance of $30$ problem samples (i.e. the area from the lowest and the  highest deviation to the mean).
\begin{figure}[ht!]
\vspace{-2ex}
\centering
\includegraphics[width=1\textwidth]{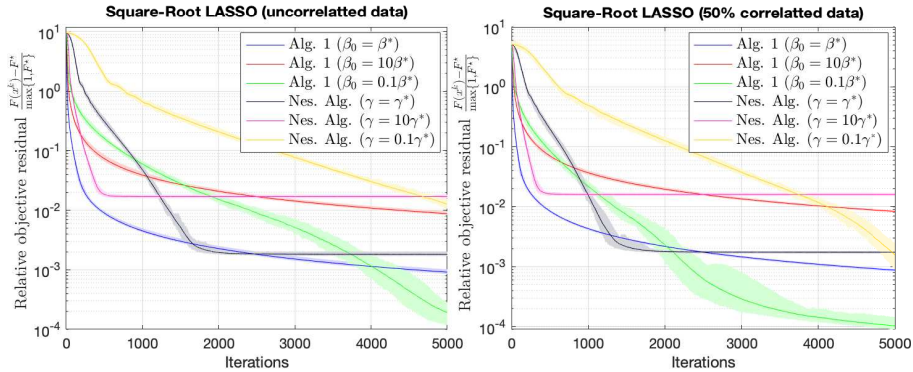}
\vspace{-2ex}
\caption{The convergence behavior of Algorithm~\ref{alg:A1} and Nesterov's smoothing scheme on $30$ problem instances of \eqref{eq:LAD} (the non-strongly convex case). Left plot: uncorrelated columns in $K$, and Right plot: $50\%$ correlated columns in $K$.}
\label{fig:sqrtLASSO_exp1}
\vspace{-2ex}
\end{figure} 

From Figure \ref{fig:sqrtLASSO_exp1}, we can observe that, with the choice $\beta_0 := \beta^{*}$ and $\gamma := \gamma^{*}$ as suggested by the theory, both algorithms perform best compared to other smaller or larger values of these parameters. 
We also see that Algorithm~\ref{alg:A1} outperforms Nesterov's smoothing scheme in both experiments.
If $\beta_0$ (respectively, $\gamma$) is large, then both algorithms make good progress in early iterations, but become saturated at a given objective value in the last iterations.
Alternatively, if $\beta_0$ (respectively, $\gamma$) is small, then both algorithms perform worse  in early iterations, but further decrease the objective value when the number of iterations is increasing.
This behavior also confirms the theoretical results stated in Theorem~\ref{th:convergence1} and in \cite{Nesterov2005c}.
In fact, if $\beta_0$ (or $\gamma$) is small, then the algorithmic stepsize is small.
Hence, the algorithm makes slow progress at early iterations, but it better approximates the nonsmooth function $g$, leading to more accurate approximation from $F(x^k)$ to $F(x^{\star})$.
In contrast, if $\beta_0$ (or $\gamma$) is large, then we have a large stepsize and therefore a faster convergence rate in early iterations.
However, the smoothed approximation is less accurate.

In order to test the strongly convex case in Theorem~\ref{th:convergence_1a}, we conduct two additional experiments on \eqref{eq:LAD} with $\rho := 0.1$.
In this case, problem \eqref{eq:LAD} is strongly convex with $\mu_f := 0.1$.
Since \cite{Nesterov2005c} does not directly handle the strongly convex case, we only compare two variants of Algorithm~\ref{alg:A1} stated in Theorem~\ref{th:convergence1} (\texttt{Alg. 1}) and Theorem~\ref{th:convergence_1a} (\texttt{Alg. 1b}), respectively.
We set $\beta_0 := \frac{0.382\norms{K}^2}{\mu_f}$ in \texttt{Alg. 1b)} as suggested by Theorem~\ref{th:convergence_1a}.
We consider two experiments as follows:
\begin{itemize}
\itemsep=0.1em
\item \textit{Experiment 3:}
Test two variants of Algorithm \ref{alg:A1} on a collection of $30$ problem instances with uncorrelated columns of $K$.
\item \textit{Experiment 4:} 
Conduct the same test on another set of $30$ problem instances, but using $50\%$ correlated columns in $K$.
\end{itemize}
The results of both variants of Algorithm~\ref{alg:A1} are reported in Figure \ref{fig:sqrtLASSO_exp2}, where the left plot is for \textit{Experiment 3} and the right plot is for \textit{Experiment 4}.
\begin{figure}[ht!]
\vspace{-3ex}
\centering
\includegraphics[width=1\textwidth]{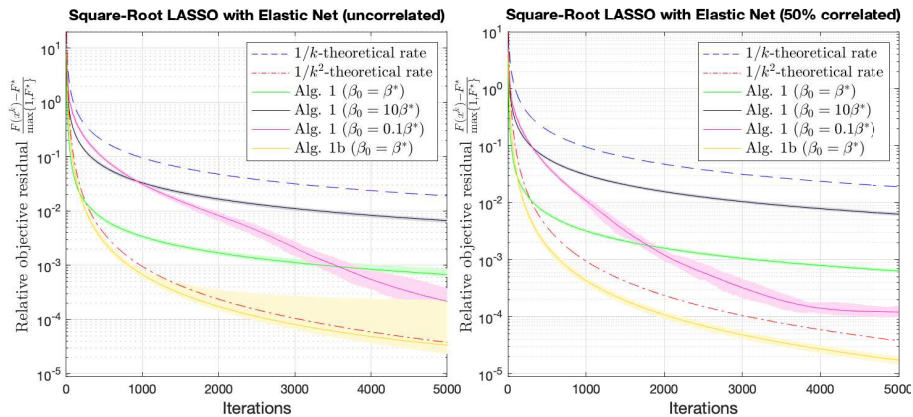}
\vspace{-2ex}
\caption{The convergence behavior of the two variants of Algorithm~\ref{alg:A1} on a collection of $30$ problem instances of \eqref{eq:LAD} (the strongly convex case). Left plot: uncorrelated columns in $K$, and Right plot: $50\%$ correlated columns in $K$.}
\label{fig:sqrtLASSO_exp2}
\vspace{-3ex}
\end{figure} 

Clearly, as shown in Figure \ref{fig:sqrtLASSO_exp2},  \texttt{Alg. 1b} (i.e. corresponding to Theorem~\ref{th:convergence_1a}) highly outperforms  \texttt{Alg. 1} (corresponding to Theorem~\ref{th:convergence1}). 
\texttt{Alg. 1} matches well the $\BigO{1/k}$ convergence rate as stated in Theorem~\ref{th:convergence1}, while \texttt{Alg. 1b} shows its $\BigO{1/k^2}$ convergence rate as indicated by Theorem~\ref{th:convergence_1a}.
Note that since $g^{*}$ in \eqref{eq:LAD} is non-strongly convex, we omit testing the result of Theorem~\ref{th:convergence_1b}.
This case is rather well studied in the literature, see, e.g., \cite{Chambolle2011}.

\beforesec
\section{Concluding Remarks}\label{sec:concl}
\aftersec
We have developed a new variant of ASGRD introduced in \cite[Algorithm 1]{TranDinh2015b}, Algorithm~\ref{alg:A1}, that unifies three different settings: general convexity, strong convexity, and strong convexity and smoothness.
We have proved the convergence of Algorithm~\ref{alg:A1} for three settings on three convergence criteria: gap function, primal objective residual, and dual objective residual.
Our convergence rates in all cases are optimal up to a constant factor and the convergence rates of the primal sequence is on the last iterate.
Our preliminary numerical experiments have shown  that the theoretical convergence rates of Algorithm~\ref{alg:A1} match well the actual rates observed in practice. 
The proposed algorithm can be easily extended to solve composite convex problems with three or multi-objective terms.
It can also be customized to solve other models, including general linear and nonlinear constrained convex problems as discussed in \cite{sabach2020faster,TranDinh2015b,tran2019non}.

\vskip 2mm
\noindent{\bf Acknowledgments.}
This work is partly supported by the Office of Naval Research under Grant No.  ONR-N00014-20-1-2088 (2020 - 2023), and the Nafosted Vietnam, Grant No. 101.01-2020.06 (2020 - 2022).

\appendix

\beforesec
\section{Appendix 1: Technical Lemmas}\label{apdx:basic}
\aftersec
We need the following technical lemmas for our convergence analysis in the main text.

\begin{lemma}\label{le:g_properties}$($\cite[Lemma 10]{TranDinh2015b}$)$
Given $\beta > 0$, $\dot{y}\in\R^n$, and a proper, closed, and convex function $g : \R^n \to\Rext$ with its Fenchel conjugate $g^{*}$, we define
\begin{equation}\label{eq:smoothed_varphi0}
g_{\beta}(u, \dot{y}) := \max_{y\in\R^n}\set{\iprods{u, y} -  g^{*}(y)  - \tfrac{\beta}{2}\norms{y - \dot{y}}^2}.
\end{equation}
Let $y^{*}_{\beta}(u, \dot{y})$ be the unique solution of \eqref{eq:smoothed_varphi0}.
Then, the following statements hold:

\begin{itemize}
\item[$(a)$] $g_{\beta}(\cdot,\dot{y})$ is convex w.r.t. $u$ on $\dom{g}$ and $\frac{1}{\beta + \mu_{g^{*}}}$-smooth w.r.t. $u$ on $\dom{g}$, where $\nabla_u{g_{\beta}}(u,\dot{y}) = \prox_{g^{*}/\beta}(\dot{y} + \frac{1}{\beta}u)$.
Moreover, for any $u, \hat{u} \in\dom{g}$, we have
\begin{equation}\label{eq:key_bound_apdx1}
g_{\beta}(\hat{u},\dot{y}) + \iprods{\nabla{g}_{\beta}(\hat{u},\dot{y}), u - \hat{u}} \leq g_{\beta}(u,\dot{y}) - \frac{\beta + \mu_{g^{*}}}{2}\Vert \nabla_u{g_{\beta}}(\hat{u},\dot{y}) - \nabla_u{g_{\beta}}(u,\dot{y})\Vert^2.
\end{equation}

\item[$(b)$] For any $\beta > 0$, $\dot{y}\in\R^n$, and $u\in\dom{g}$, we have
\begin{equation}\label{eq:key_bound_apdx3}
\hspace{-1ex}
\begin{array}{lcl}
g_{\beta}(u,\dot{y}) \leq g(u) \leq g_{\beta}(u,\dot{y}) + \frac{\beta}{2} [ D_{g}(\dot{y})]^2, \ \text{where} \ D_{g}(\dot{y}) := \sup_{y\in\partial{g(u)}} \norm{y - \dot{y}}.
\end{array}
\hspace{-1ex}
\end{equation}

\item[$(c)$] 
For $u\in\dom{g}$ and $\dot{y}\in\R^n$, $g_{\beta}(u,\dot{y})$ is convex in $\beta$, and for all $\hat{\beta} \geq \beta > 0$, we have
\begin{equation}\label{eq:key_bound_apdx4}
g_{\beta}(u,\dot{y}) \leq g_{\hat{\beta}}(u,\dot{y}) +  \big(\tfrac{\hat{\beta} - \beta}{2}\big)\norms{\nabla_u{g_{\beta}}(u,\dot{y}) - \dot{y} }^2.
\hspace{-2.5ex}
\end{equation}

\item[$(d)$] For any $\beta > 0$, and $u, \hat{u}\in\dom{g}$, we have
\begin{equation}\label{eq:key_bound_apdx5}
\hspace{5ex}
\begin{array}{lcl}
g_{\beta}(u,\dot{y}) + \iprods{\nabla_u{g_{\beta}}(u, \dot{y}), \hat{u} - u} & \leq & \ell_{\beta}(\hat{u}, \dot{y}) - \frac{\beta}{2}\norms{\nabla_u{g_{\beta}}(u,\dot{y}) - \dot{y}}^2,
\end{array}
\end{equation}
where $\ell_{\beta}(\hat{u}, \dot{y}) := \iprods{\hat{u}, \nabla_u{g_{\beta}}(u,\dot{y}) } - g^{*}(\nabla_u{g_{\beta}}(u,\dot{y})) \leq g(\hat{u}) - \frac{\mu_{g^{*}}}{2}\norms{\nabla_u{g_{\beta}}(u,\dot{y})   - \nabla{g}(\hat{u})}^2$ for any $\nabla{g}(\hat{u}) \in \partial{g}(\hat{u})$.
\end{itemize}
\end{lemma}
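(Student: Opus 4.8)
The plan is to exploit the conjugate-duality structure of the smoothed function. I would set $\phi_{\beta}(y) := g^{*}(y) + \tfrac{\beta}{2}\norms{y - \dot{y}}^2$, so that $g_{\beta}(u,\dot{y}) = \sup_y\sets{\iprods{u,y} - \phi_{\beta}(y)} = \phi_{\beta}^{*}(u)$. Because $g^{*}$ is $\mu_{g^{*}}$-strongly convex and the quadratic penalty is $\beta$-strongly convex, $\phi_{\beta}$ is $(\beta + \mu_{g^{*}})$-strongly convex, whence the maximizer $y_{\beta}^{*}(u,\dot{y})$ in \eqref{eq:smoothed_varphi0} is unique. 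Writing the first-order optimality condition $0 \in \partial g^{*}(y) + \beta(y - \dot{y}) - u$ and comparing with the definition of the proximal operator shows $y_{\beta}^{*}(u,\dot{y}) = \prox_{g^{*}/\beta}(\dot{y} + \tfrac{1}{\beta}u)$. I will use this maximizer identity throughout.

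For part $(a)$, I would invoke the classical duality between strong convexity and smoothness: since $\phi_{\beta}$ is $(\beta+\mu_{g^{*}})$-strongly convex, its conjugate $g_{\beta} = \phi_{\beta}^{*}$ is convex and $\tfrac{1}{\beta+\mu_{g^{*}}}$-smooth, and its gradient equals the unique maximizer, giving $\nabla_u g_{\beta}(u,\dot{y}) = y_{\beta}^{*}(u,\dot{y}) = \prox_{g^{*}/\beta}(\dot{y} + \tfrac{1}{\beta}u)$. Inequality \eqref{eq:key_bound_apdx1} is then exactly the co-coercivity estimate for an $L$-smooth convex function with $L = \tfrac{1}{\beta+\mu_{g^{*}}}$, so that $\tfrac{1}{2L} = \tfrac{\beta+\mu_{g^{*}}}{2}$. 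For part $(b)$, the lower bound $g_{\beta}(u,\dot{y}) \leq g(u)$ follows by discarding the nonnegative penalty inside the supremum and using $g = g^{**}$; for the upper bound I pick any $y\in\partial g(u)$, use Fenchel's equality $g(u) = \iprods{u,y} - g^{*}(y)$, add and subtract $\tfrac{\beta}{2}\norms{y - \dot{y}}^2$, and bound the retained term by $\tfrac{\beta}{2}[D_g(\dot{y})]^2$.

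For part $(c)$, convexity in $\beta$ is immediate because $g_{\beta}(u,\dot{y})$ is a supremum over $y$ of functions affine in $\beta$. The inequality \eqref{eq:key_bound_apdx4} I would obtain directly: evaluating the $\hat{\beta}$-problem at the maximizer $y_{\beta}^{*}$ of the $\beta$-problem gives $g_{\hat{\beta}}(u,\dot{y}) \geq \iprods{u,y_{\beta}^{*}} - g^{*}(y_{\beta}^{*}) - \tfrac{\hat{\beta}}{2}\norms{y_{\beta}^{*} - \dot{y}}^2 = g_{\beta}(u,\dot{y}) - \tfrac{\hat{\beta}-\beta}{2}\norms{y_{\beta}^{*} - \dot{y}}^2$, which rearranges to the claim after replacing $y_{\beta}^{*}$ by $\nabla_u g_{\beta}(u,\dot{y})$. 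For part $(d)$, substituting $y^{*} := \nabla_u g_{\beta}(u,\dot{y})$ into the definition of $g_{\beta}$ and expanding $\iprods{y^{*}, \hat{u} - u}$ turns the left-hand side into the exact identity $\iprods{\hat{u}, y^{*}} - g^{*}(y^{*}) - \tfrac{\beta}{2}\norms{y^{*} - \dot{y}}^2 = \ell_{\beta}(\hat{u},\dot{y}) - \tfrac{\beta}{2}\norms{y^{*} - \dot{y}}^2$. The bound on $\ell_{\beta}$ then follows from the strong-convexity-sharpened Fenchel--Young inequality: for any $\nabla g(\hat{u})\in\partial g(\hat{u})$ we have $\hat{u}\in\partial g^{*}(\nabla g(\hat{u}))$, and $\mu_{g^{*}}$-strong convexity of $g^{*}$ gives $g^{*}(y^{*}) \geq g^{*}(\nabla g(\hat{u})) + \iprods{\hat{u}, y^{*} - \nabla g(\hat{u})} + \tfrac{\mu_{g^{*}}}{2}\norms{y^{*} - \nabla g(\hat{u})}^2$; inserting this and using $g(\hat{u}) = \iprods{\hat{u},\nabla g(\hat{u})} - g^{*}(\nabla g(\hat{u}))$ yields $\ell_{\beta}(\hat{u},\dot{y}) \leq g(\hat{u}) - \tfrac{\mu_{g^{*}}}{2}\norms{y^{*} - \nabla g(\hat{u})}^2$.

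I expect the only genuinely nontrivial ingredient to be the strong-convexity/smoothness conjugate duality underlying part $(a)$, equivalently the co-coercivity inequality \eqref{eq:key_bound_apdx1}; once that classical fact is in hand, parts $(b)$--$(d)$ reduce to direct Fenchel-type algebra built on the maximizer identity and the sharpened Fenchel--Young inequality, with the leading inequality in part $(d)$ in fact holding with equality.
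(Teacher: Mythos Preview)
Your proposal is correct in all four parts and follows the standard conjugate-duality route one would expect. Note, however, that the paper does not supply its own proof of this lemma: it is stated with a citation to \cite[Lemma~10]{TranDinh2015b} and used as a black box, so there is no in-paper argument to compare your sketch against.
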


\begin{lemma}\label{le:tech_lemma}
The following statements hold.
\begin{itemize}
\item[$\mathrm{(a)}$] 
Let $\set{\tau_k}\subset (0, 1]$ be computed by $\tau_{k+1} := \frac{\tau_k}{2}\big[ (\tau_k^2 + 4)^{1/2} - \tau_k\big]$ for some $\tau_0 \in (0, 1]$.
Then, we have 
\begin{equation*}
 \tau_k^2 = (1-\tau_k)\tau_{k-1}^2, \quad \frac{1}{k + 1/\tau_0} \leq \tau_k < \frac{2}{k + 2/\tau_0}, \quad\text{and}\quad  \frac{1}{1 + \tau_{k-2}} \leq 1 - \tau_k \leq \frac{1}{1+\tau_{k-1}}.
\end{equation*}
Moreover, we also have 
\begin{equation*}
\begin{array}{ll}
& \Theta_{l,k} := \displaystyle\prod_{i=l}^k(1-\tau_i) = \dfrac{\tau_k^2}{\tau_{l-1}^2} \quad\text{for}\ 0\leq l\leq k, \qquad \Theta_{0,k} = \dfrac{(1-\tau_0)\tau_k^2}{\tau_0^2} \leq \dfrac{4(1-\tau_0)}{(\tau_0k+2)^2}, \vspace{1ex}\\
\text{and}\quad &\dfrac{\tau_{l+1}^2}{\tau_{k+2}^2}  \leq  \Gamma_{l,k} := \displaystyle\prod_{i=l}^k(1+\tau_i) \leq  \dfrac{\tau_l^2}{\tau_{k+1}^2} \quad\text{for} \ 0 \leq l \leq k.
\end{array}
\end{equation*}
If we update $\beta_k := \frac{\beta_{k-1}}{1+\tau_k}$ for a given $\beta_0 > 0$, then 
\begin{equation*}
\frac{4\beta_0\tau_0^2}{\tau_1^2[\tau_0(k+1) + 2]^2} \leq \frac{\beta_0\tau_{k+1}^2}{\tau_1^2} \leq \beta_k = \frac{\beta_0}{\Gamma_{1,k}}\leq \frac{\beta_0\tau_{k+2}^2}{\tau_2^2} \leq \frac{4\beta_0\tau_0^2}{\tau_2^2[\tau_0(k+2) + 2]^2}.
\end{equation*}
\item[$\mathrm{(b)}$] 
Let $\set{\tau_k}\subset (0, 1]$ be  computed by solving $\tau_k^3 + \tau_k^2 + \tau_{k-1}^2\tau_k - \tau_{k-1}^2 = 0$ for all $k\geq 1$ and $\tau_0 := 1$.
Then, we have $\frac{1}{k+1} \leq \tau_k \leq \frac{2}{k+2}$ and $\Theta_{1,k} := \prod_{i=1}^k(1-\tau_i) \leq \frac{1}{k+1}$.
Moreover, if we update $\beta_k := \frac{\beta_{k-1}}{1+\tau_k}$, then $\beta_k \leq \frac{2\beta_0}{k+2}$.
\end{itemize}
\end{lemma}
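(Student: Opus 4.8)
The plan is to treat parts (a) and (b) separately, since they concern two different recursions for $\set{\tau_k}$, but the underlying strategy is the same: extract a telescoping identity from the defining equation, then convert it into the two-sided bounds by induction.

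For part (a), I would start by observing that the update $\tau_{k+1} = \tfrac{\tau_k}{2}[(\tau_k^2+4)^{1/2}-\tau_k]$ is exactly the positive root of the quadratic $\tau_{k+1}^2 + \tau_k\tau_{k+1} - \tau_k^2 = 0$ in $\tau_{k+1}$; rearranging gives $\tau_{k+1}^2 = \tau_k^2 - \tau_k\tau_{k+1} = \tau_k^2(1 - \tau_{k+1}/\tau_k)$, and since one checks $\tau_{k+1}\le\tau_k$ (the map $\tau\mapsto\tfrac{\tau}{2}[(\tau^2+4)^{1/2}-\tau]$ is below the identity on $(0,1]$), we may write $\tau_{k+1}/\tau_k = 1-\tau_{k+1}$ provided $\tau_k<1$; but the cleanest route is to verify directly that $\tau_k^2 = (1-\tau_k)\tau_{k-1}^2$ is equivalent to the quadratic, which is the first claimed identity. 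From this identity the product formula $\Theta_{l,k} = \prod_{i=l}^k(1-\tau_i) = \prod_{i=l}^k \tau_i^2/\tau_{i-1}^2 = \tau_k^2/\tau_{l-1}^2$ telescopes immediately, and the $\Theta_{0,k}$ bound then follows once we have the lower bound $\tau_k\ge 1/(k+1/\tau_0)$. To get the two-sided bound on $\tau_k$ itself, I would set $a_k := 1/\tau_k$ and derive from $\tau_k^2 = (1-\tau_k)\tau_{k-1}^2$ the relation $a_{k-1}^2 = a_k^2 - a_k$ (divide through by $\tau_{k-1}^2\tau_k^2$), hence $a_k^2 - a_{k-1}^2 = a_k$, which forces $a_k - a_{k-1} = a_k/(a_k+a_{k-1})\in(1/2,1)$ since $a_{k-1}<a_k<2a_{k-1}$ can be maintained inductively; summing the telescoped inequalities $1/2 < a_k - a_{k-1} < 1$ against the base value $a_0 = 1/\tau_0$ yields $k/2 + 1/\tau_0 < a_k < k + 1/\tau_0$, i.e. the claimed bounds (with a little care about which inequality gives which side — one gets $\tau_k < 2/(k+2/\tau_0)$ and $\tau_k \ge 1/(k+1/\tau_0)$). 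The bounds on $1-\tau_k$ in terms of $\tau_{k-1},\tau_{k-2}$ come from $1-\tau_k = \tau_k^2/\tau_{k-1}^2$ combined with the monotone comparison $\tau_k^2 = \tau_{k-1}^2 - \tau_{k-1}\tau_k$ rearranged as a quadratic. Finally, for the $\Gamma_{l,k} = \prod(1+\tau_i)$ estimates I would use $1+\tau_i = (1-\tau_{i+1})^{-1}$? — no; rather $1+\tau_k = \tau_{k-1}^2/\tau_k^2 \cdot$? I would instead note $(1+\tau_k)$ does not telescope exactly, but $\prod_{i=l}^k(1+\tau_i)$ is squeezed between $\prod (1-\tau_{i+1})^{-1} = \tau_l^2/\tau_{k+1}^2$ and $\prod(1-\tau_i)^{-1}$-type expressions using $1-\tau_{k+1}\le 1/(1+\tau_k)\le 1/(1+\tau_{k+1})\cdot(\text{shift})$, i.e. exactly the third displayed inequality of part (a); chaining these gives the stated sandwich, and the $\beta_k$ bounds then follow by $\beta_k = \beta_0/\Gamma_{1,k}$ and substituting the explicit $\tau$-bounds.

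For part (b), the defining cubic $\tau_k^3 + \tau_k^2 + \tau_{k-1}^2\tau_k - \tau_{k-1}^2 = 0$ rearranges to $\tau_{k-1}^2(1-\tau_k) = \tau_k^2(1+\tau_k)$, which I would record first (it is the analog of the identity in (a) with the extra factor $1+\tau_k$, reflecting the $\beta_k = \beta_{k-1}/(1+\tau_k)$ shrinkage). Existence and uniqueness of a root in $(0,1)$ is a one-line monotonicity/intermediate-value argument on $\phi(\tau) := \tau^3+\tau^2+\tau_{k-1}^2\tau - \tau_{k-1}^2$: $\phi(0) = -\tau_{k-1}^2 < 0$, $\phi(1) = 2 > 0$, $\phi$ strictly increasing. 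For the bound, the product $\Theta_{1,k} = \prod_{i=1}^k(1-\tau_i) = \prod_{i=1}^k \tau_i^2(1+\tau_i)/\tau_{i-1}^2 = (\tau_k^2/\tau_0^2)\prod_{i=1}^k(1+\tau_i) = \tau_k^2\prod_{i=1}^k(1+\tau_i)$ using $\tau_0=1$; this is $\le \tau_k^2 \cdot \prod(1-\tau_i)^{-1}$, which rearranges to $\Theta_{1,k}^2 \le \tau_k^2$, i.e. $\Theta_{1,k}\le\tau_k$ — that is not yet $1/(k+1)$, so I would instead get $\tau_k\le 2/(k+2)$ and $\tau_k\ge 1/(k+1)$ first by the inverse-sequence trick as in (a): from $\tau_{k-1}^2(1-\tau_k) = \tau_k^2(1+\tau_k)\ge\tau_k^2$ we recover $1/\tau_k^2 - 1/\tau_k \ge 1/\tau_{k-1}^2$, hence $1/\tau_k - 1/\tau_{k-1}\ge$ something $\ge 1/2$ when combined with the reverse comparison, and the opposite inequality using $1+\tau_k\le 1 + 1 = 2$ near small $\tau$ gives the upper bound, with base $\tau_0 = 1$. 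Then $\Theta_{1,k} \le \tau_k \le 2/(k+2)$? — I would double-check whether $\Theta_{1,k}\le\tau_k$ upgrades to $\le 1/(k+1)$; in fact $\prod_{i=1}^k(1+\tau_i)\le\prod_{i=1}^k(1-\tau_i)^{-1}$ gives $\Theta_{1,k}^2\le\tau_k^2$ only loosely, so the sharper route is $\Theta_{1,k} = \tau_k^2\prod(1+\tau_i) \ge \tau_k^2$, and separately bound $\prod(1+\tau_i)\le$ (telescoping via $1+\tau_i\le\tau_{i-1}^2/\tau_i^2\cdot(1-\tau_i)^{-1}$?) — I would instead just bound $\Theta_{1,k}$ directly by induction: assume $\Theta_{1,k-1}\le 1/k$, then $\Theta_{1,k} = (1-\tau_k)\Theta_{1,k-1}\le(1-\tau_k)/k$, and it suffices that $(1-\tau_k)/k\le 1/(k+1)$, i.e. $\tau_k\ge 1/(k+1)$, which is the lower bound already obtained. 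Finally $\beta_k = \beta_{k-1}/(1+\tau_k)$ with $\tau_k\ge 1/(k+1)$ gives $\beta_k\le\beta_{k-1}(k+1)/(k+2)$, hence $\beta_k\le\beta_0\prod_{i=1}^k\frac{i+1}{i+2} = \beta_0\cdot\frac{2}{k+2}$ by telescoping.

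The main obstacle I anticipate is \emph{not} any single estimate but the bookkeeping of which direction of each inequality survives through the inverse-sequence substitution and the product telescopings — in particular, establishing cleanly that $1/\tau_k - 1/\tau_{k-1}$ lies in a fixed bounded interval requires a simultaneous induction maintaining $\tau_{k-1}/2 < \tau_k < \tau_{k-1}$ (so that the cross terms $\tau_k + \tau_{k-1}$ etc. are controlled two-sidedly), and the $\Gamma_{l,k}$ sandwich in part (a) hinges on correctly using the two-sided bound $1/(1+\tau_{k-2})\le 1-\tau_k\le 1/(1+\tau_{k-1})$ with the right index shifts so the telescoping doesn't lose or gain a factor. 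Once those monotonicity invariants are set up, everything else is routine summation of the telescoped gaps.
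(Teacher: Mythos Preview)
Your plan matches the paper's proof almost exactly: the telescoping via $\tau_k^2 = (1-\tau_k)\tau_{k-1}^2$ in (a), the inverse sequence $a_k := 1/\tau_k$ with $a_k^2 - a_{k-1}^2 = a_k$ to get the two-sided bounds, the $\Gamma_{l,k}$ sandwich deduced from the $1-\tau_k$ bounds with shifted indices, and in (b) the rearranged identity $\tau_{k-1}^2(1-\tau_k) = \tau_k^2(1+\tau_k)$ followed by the product estimates for $\Theta_{1,k}$ and $\beta_k$. The paper in fact cites out the first two relations of (a) to \cite{TranDinh2012a}, and your $a_k$-argument supplies what is omitted there.

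The one place your reasoning needs a correction is the lower bound $\tau_k \geq 1/(k+1)$ in part (b). The inequality $1+\tau_k \leq 2$ you invoke gives only $a_k^2 - a_k \leq 2a_{k-1}^2$, which does \emph{not} telescope to $a_k - a_{k-1} \leq 1$. What works (and what the paper uses) is $1+\tau_k \leq 1/(1-\tau_k)$: substituting into $\tau_{k-1}^2(1-\tau_k) = \tau_k^2(1+\tau_k)$ yields $\tau_{k-1}^2(1-\tau_k)^2 \leq \tau_k^2$, hence $\tau_{k-1}(1-\tau_k) \leq \tau_k$, i.e.\ $\tau_k \geq \tau_{k-1}/(1+\tau_{k-1})$, and induction from $\tau_0 = 1$ gives $\tau_k \geq 1/(k+1)$. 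With that in hand, the paper finishes both remaining bounds by the explicit telescoping products $\prod_{i=1}^k\bigl(1 - \tfrac{1}{i+1}\bigr) = \tfrac{1}{k+1}$ and $\prod_{i=1}^k\bigl(1 + \tfrac{1}{i+1}\bigr) = \tfrac{k+2}{2}$, which is equivalent to, and slightly cleaner than, the separate induction you set up for $\Theta_{1,k}$.
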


\begin{proof}
The first two relations of (a) have been proved, e.g., in \cite{TranDinh2012a}. 
Let us prove the last inequality of (a).
Since $\frac{1}{1+\tau_{k-2}} \leq 1-\tau_k$ is equivalent to $\tau_{k-2}(1-\tau_k) \geq \tau_k$.
Using $1- \tau_k = \frac{\tau_k^2}{\tau_{k-1}^2}$, we have $\tau_k\tau_{k-2} \geq \tau_{k-1}^2$.
Utilizing $\tau_k = \frac{\tau_{k-1}}{2}\big[(\tau_{k-1}^2 + 4)^2 - \tau_{k-1}\big]$, this condition is equivalent to $\tau_{k-2}^2 \geq \tau_{k-1}^2(1 + \tau_{k-2})$.
However, since $\tau_{k-1}^2 = (1-\tau_{k-1})\tau_{k-2}^2$, the last condition becomes $1 \geq (1-\tau_{k-1})(1+\tau_{k-2})$, or equivalently, $\tau_{k-1} \leq \tau_{k-2}$, which automatically holds.

To prove $1-\tau_k \leq \frac{1}{1 + \tau_{k-1}}$, we write it as $\tau_{k-1}(1-\tau_k) \leq \tau_{k}$.
Using again $\tau_k^2 = (1-\tau_k)\tau_{k-1}^2$, the last inequality is equivalent to $\tau_k \leq \tau_{k-1}$, which automatically holds.
The last statements of (a) is a consequence of $1-\tau_k = \frac{\tau_k^2}{\tau_{k-1}^2}$ and the previous relations.

(b) We consider the function $\varphi(\tau) := \tau^3 + \tau^2 + \tau_{k-1}^2\tau - \tau_{k-1}^2$.
Clearly, $\varphi(0) = -\tau_{k-1}^2 < 0$ and $\varphi(1) = 2 > 0$. Moreover, $\varphi'(\tau) = 3\tau^2 + 2\tau + \tau_{k-1}^2 > 0$ for $\tau \in [0, 1]$.
Hence, the cubic equation $\varphi(\tau) = 0$ has a unique solution $\tau_k \in (0, 1)$.
Therefore, $\sets{\tau_k}_{k\geq 0}$ is well-defined.

Next, since $\tau_k^3 + \tau_k^2 + \tau_k\tau_{k-1}^2 - \tau_{k-1}^2 = 0$ is equivalent to $\tau_{k-1}^2(1-\tau_k) = \tau_k^2(1+\tau_k)$, we have  $\tau_{k-1}^2(1-\tau_k) = \tau_k^2(1+\tau_k) \leq \frac{\tau_k^2}{1-\tau_k}$.
This inequality becomes $\tau_k \geq \frac{\tau_{k-1}}{1 + \tau_{k-1}}$. 
By induction and $\tau_0 = 1$, we can easily show that $\tau_k \geq \frac{1}{k+1}$.
On the other hand, $\tau_{k-1}^2(1-\tau_k) = \tau_k^2(1+\tau_k) \geq \tau_k^2$.
From this inequality, with a similar argument as in the proof of the statement (a), we can also easily show that $\tau_k \leq \frac{2}{k+2}$.
Hence, we have $\frac{1}{k+1} \leq \tau_k \leq \frac{2}{k+2}$ for all $k\geq 0$.

Finally, since $\tau_k \geq \frac{1}{k+1}$, we have $\prod_{i=1}^k(1-\tau_i) \leq \prod_{i=1}^k\left(1 - \frac{1}{i+1}\right) = \frac{1}{k+1}$.
Alternatively, $\prod_{i=1}^k(1+\tau_i) \geq \prod_{i=1}^k\left(1 + \frac{1}{i+1}\right) = \frac{k+2}{2}$.
However, since $\beta_k = \frac{\beta_{k-1}}{1+\tau_k}$, we have $\beta_k = \beta_0\prod_{i=1}^k\frac{1}{1+\tau_i} \leq \frac{2\beta_0}{k+2}$.
\Eproof
\end{proof}

\begin{lemma}\label{le:tech_lemma2}$($\cite[Lemma 4]{ZhuLiuTran2020} and \cite{TranDinh2015b}$)$
The following statements hold.
\begin{itemize}
\item[$\mathrm{(a)}$] For any $u, v, w\in\R^p$ and $t_1, t_2 \in \R$ such that $t_1 + t_2 \neq 0$, we have
\begin{equation*}
t_1\norms{u - w}^2 + t_2\norms{v - w}^2 = (t_1 + t_2)\norms{w - \tfrac{1}{t_1+t_2}(t_1u + t_2v)}^2 + \tfrac{t_1t_2}{t_1+t_2}\norms{u-v}^2.
\end{equation*}

\item[$\mathrm{(b)}$] For any $\tau \in (0, 1)$, $\hat{\beta}, \beta > 0$, $w, z\in\R^p$, we have
\begin{equation*}
\begin{array}{lcl}
\beta(1-\tau) \norms{w - z}^2 + \beta\tau\norms{w}^2 - (1-\tau)(\hat{\beta} - \beta)\norms{z}^2 & = & \beta\norms{w - (1-\tau)z}^2 \vspace{1ex}\\
&& + {~} (1-\tau)\big[\tau\beta - (\hat{\beta} - \beta) \big]\norms{z}^2.
\end{array}
\end{equation*}
\end{itemize}
\end{lemma}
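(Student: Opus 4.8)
The plan is to treat both identities as purely algebraic facts in the Euclidean space $\R^p$: I would prove part (a) directly by expanding both sides and comparing coefficients, and then obtain part (b) as an immediate specialization of part (a).

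For part (a), write $s := t_1 + t_2 \neq 0$ and expand everything in terms of $\norms{u}^2$, $\norms{v}^2$, $\norms{w}^2$, $\iprods{u,w}$, $\iprods{v,w}$, and $\iprods{u,v}$. The left-hand side $t_1\norms{u-w}^2 + t_2\norms{v-w}^2$ has coefficients $t_1$ on $\norms{u}^2$, $t_2$ on $\norms{v}^2$, $t_1+t_2$ on $\norms{w}^2$, $-2t_1$ on $\iprods{u,w}$, $-2t_2$ on $\iprods{v,w}$, and $0$ on $\iprods{u,v}$. On the right-hand side, using $\norms{w - \tfrac1s(t_1u+t_2v)}^2 = \norms{w}^2 - \tfrac2s\iprods{w,\,t_1u+t_2v} + \tfrac{1}{s^2}\norms{t_1u+t_2v}^2$ together with $\norms{t_1u+t_2v}^2 = t_1^2\norms{u}^2 + 2t_1t_2\iprods{u,v} + t_2^2\norms{v}^2$ and $\tfrac{t_1t_2}{s}\norms{u-v}^2 = \tfrac{t_1t_2}{s}\norms{u}^2 - \tfrac{2t_1t_2}{s}\iprods{u,v} + \tfrac{t_1t_2}{s}\norms{v}^2$, the coefficient of $\norms{u}^2$ becomes $\tfrac{t_1^2}{s} + \tfrac{t_1t_2}{s} = \tfrac{t_1(t_1+t_2)}{s} = t_1$, that of $\norms{v}^2$ is $t_2$ by symmetry, that of $\iprods{u,v}$ is $\tfrac{2t_1t_2}{s} - \tfrac{2t_1t_2}{s} = 0$, and the $\norms{w}^2$, $\iprods{w,u}$, $\iprods{w,v}$ coefficients are $s$, $-2t_1$, $-2t_2$ exactly as on the left. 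Hence the two sides coincide term by term. (One may also simply cite this as the standard weighted three-point identity.)

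For part (b), I would invoke part (a) with $u := z$, $v := 0$, $t_1 := \beta(1-\tau)$, and $t_2 := \beta\tau$, so that $t_1 + t_2 = \beta \neq 0$, $\tfrac{t_1}{t_1+t_2} = 1-\tau$, and $\tfrac{t_1t_2}{t_1+t_2} = \beta\tau(1-\tau)$. Part (a) then gives
\[
\beta(1-\tau)\norms{w-z}^2 + \beta\tau\norms{w}^2 = \beta\norms{w - (1-\tau)z}^2 + \beta\tau(1-\tau)\norms{z}^2.
\]
Subtracting $(1-\tau)(\hat\beta - \beta)\norms{z}^2$ from both sides and collecting the $\norms{z}^2$-terms on the right as $(1-\tau)\big[\tau\beta - (\hat\beta - \beta)\big]\norms{z}^2$ produces exactly the claimed identity.

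I do not expect any real obstacle: both statements are elementary, and the only things requiring care are the coefficient bookkeeping in part (a) and the correct choice of substitution (in particular $v = 0$) that reduces (b) to (a). The hypothesis $t_1 + t_2 \neq 0$ in (a), and $\beta > 0$, $\tau \in (0,1)$ in (b), are used solely to ensure the relevant denominators do not vanish; the parameter $\hat\beta$ appears in (b) only additively and plays no role beyond the final regrouping.
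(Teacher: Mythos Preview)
Your proposal is correct: both identities are purely algebraic, your coefficient comparison for part~(a) is accurate, and your substitution $u:=z$, $v:=0$, $t_1:=\beta(1-\tau)$, $t_2:=\beta\tau$ cleanly reduces part~(b) to part~(a). The paper itself does not give a proof of this lemma but simply cites it from \cite{ZhuLiuTran2020,TranDinh2015b}, so your direct verification is entirely appropriate and arguably adds value over the paper's treatment.
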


The following lemma is a key step to address the strongly convex  case of $f$ in \eqref{eq:saddle_point_prob}.

\begin{lemma}\label{le:element_est12}
Given $L_k > 0$,  $\mu_f > 0$, and $\tau_k \in (0, 1)$, let $m_k := \frac{L_k + \mu_f}{L_{k-1} + \mu_f}$ and $a_k := \frac{L_k}{L_{k-1} + \mu_f}$. 
Assume that the following two conditions hold:
\begin{equation}\label{eq:element_cond2}
\arraycolsep=0.2em
\left\{\begin{array}{llcl}
&(1-\tau_k) \big[ \tau_{k-1}^2 + m_k\tau_k \big]&\geq & a_k\tau_k \vspace{1ex}\\
&m_k\tau_k\tau_{k-1}^2 + m_k^2\tau_k^2 &\geq & a_k\tau_{k-1}^2.
\end{array}\right.
\end{equation}
Let $\set{x^k}$ be a given sequence in $\R^p$.
We define $\hat{x}^k := x^k + \frac{1}{\omega_k}(x^k - x^{k-1})$, where $\omega_k$ is chosen such that
\begin{equation}\label{eq:x_extra_step}
\max\set{\frac{\tau_{k-1} + \sqrt{\tau_{k-1}^2 +  4a_k}}{2(1-\tau_{k-1})}, \frac{a_k\tau_k}{(1-\tau_k)(1-\tau_{k-1})\tau_{k-1}}} \leq \omega_k \leq \frac{\tau_{k-1}^2 + m_k\tau_k}{\tau_{k-1}(1-\tau_{k-1})}.
\end{equation}
Then, $\omega_k$ is well-defined, and for any $x \in \R^p$, we have
\begin{equation}\label{eq:element_est12}
\hspace{-2ex}
\arraycolsep=0.2em
\begin{array}{ll}
&L_k\tau_k^2\norms{\frac{1}{\tau_k}[\hat{x}^k - (1-\tau_k)x^k] - x}^2 -  \mu_f \tau_k(1-\tau_k)\norms{x^k - x}^2 \vspace{1ex}\\
&\qquad \leq {~} (1-\tau_k) \left(L_{k-1} + \mu_f\right)\tau_{k-1}^2\norms{\frac{1}{\tau_{k-1}}[x^k - (1-\tau_{k-1})x^{k-1}] - x}^2.
\end{array}
\hspace{-2ex}
\end{equation}
\end{lemma}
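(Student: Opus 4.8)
The plan is to reduce the quadratic inequality \eqref{eq:element_est12} to the elementary identity in Lemma~\ref{le:tech_lemma2}(a), using the algebraic substitution $\hat{x}^k = x^k + \frac{1}{\omega_k}(x^k - x^{k-1})$ to rewrite the left-hand side as a combination of squared distances centered at the three points $x^k$, $x^{k-1}$, and $x$. First I would expand the vector $\frac{1}{\tau_k}[\hat{x}^k - (1-\tau_k)x^k]$: substituting the definition of $\hat{x}^k$ gives $\frac{1}{\tau_k}\big[(1 + \frac{1}{\omega_k})x^k - \frac{1}{\omega_k}x^{k-1} - (1-\tau_k)x^k\big] = \frac{1}{\tau_k}\big[(\tau_k + \frac{1}{\omega_k})x^k - \frac{1}{\omega_k}x^{k-1}\big]$, an affine combination of $x^k$ and $x^{k-1}$ whose coefficients sum to $1$. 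Hence $\big\Vert \frac{1}{\tau_k}[\hat{x}^k - (1-\tau_k)x^k] - x\big\Vert^2$ can be written, via Lemma~\ref{le:tech_lemma2}(a) applied in the "reverse" direction, as a weighted sum $s_1\norms{x^k - x}^2 + s_2\norms{x^{k-1} - x}^2$ minus a cross term $c\,\norms{x^k - x^{k-1}}^2$, where $s_1, s_2, c$ are explicit rational functions of $\tau_k$ and $\omega_k$.

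Next I would similarly expand the target right-hand-side vector $\frac{1}{\tau_{k-1}}[x^k - (1-\tau_{k-1})x^{k-1}]$, which is again an affine combination of $x^k$ and $x^{k-1}$, so that $\big\Vert \frac{1}{\tau_{k-1}}[x^k - (1-\tau_{k-1})x^{k-1}] - x\big\Vert^2$ expands as $r_1\norms{x^k - x}^2 + r_2\norms{x^{k-1} - x}^2 - d\,\norms{x^k - x^{k-1}}^2$ with $r_1 = 1/\tau_{k-1}$, $r_2 = (1-\tau_{k-1})/\tau_{k-1}$... wait, rather the coefficients come directly from Lemma~\ref{le:tech_lemma2}(a) with $t_1 = 1/\tau_{k-1}$, $t_2 = -(1-\tau_{k-1})/\tau_{k-1}$ (summing to $1$). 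After these two expansions, \eqref{eq:element_est12} becomes an inequality of the form $A\norms{x^k - x}^2 + B\norms{x^{k-1} - x}^2 + C\norms{x^k - x^{k-1}}^2 \le 0$ that must hold for all $x$; since $x$ is arbitrary this is equivalent to the pointwise sign conditions together with a discriminant/PSD condition on the associated quadratic form in the two "direction" vectors $x^k - x$ and $x^{k-1} - x$ (noting $x^k - x^{k-1} = (x^k - x) - (x^{k-1} - x)$). The plan is to show $A \le 0$, $B \le 0$, and $AB \ge C^2/4$ (or the appropriate reformulation), and to verify that exactly these three reductions are implied by the two hypotheses in \eqref{eq:element_cond2} and the upper and lower bounds on $\omega_k$ in \eqref{eq:x_extra_step}.

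Concretely, I expect the coefficient of $\norms{x^k-x}^2$ to pick up the $-\mu_f\tau_k(1-\tau_k)$ term and to require the first inequality of \eqref{eq:element_cond2}; the well-definedness of $\omega_k$ (nonemptiness of the interval in \eqref{eq:x_extra_step}) should follow from comparing the lower and upper endpoints, which is where the quantity $\tau_{k-1} + \sqrt{\tau_{k-1}^2 + 4a_k}$ — the larger root of a quadratic in $\omega_k$ — naturally appears, and this comparison is itself guaranteed by the second inequality of \eqref{eq:element_cond2}. The remaining endpoint $\frac{a_k\tau_k}{(1-\tau_k)(1-\tau_{k-1})\tau_{k-1}}$ should be exactly what makes the cross-term coefficient sign work out. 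The main obstacle will be the bookkeeping: carefully tracking the rational expressions in $\tau_k, \tau_{k-1}, \omega_k, a_k, m_k$ through the two applications of Lemma~\ref{le:tech_lemma2}(a), and checking that the discriminant-type inequality does not require more than \eqref{eq:element_cond2} gives — in particular that the choice of $\omega_k$ in the stated window is tight enough to close the argument but loose enough to be nonempty. Once the coefficients are in hand, each individual sign check is routine, so the real work is organizing the algebra so that the roles of the two conditions in \eqref{eq:element_cond2} and the two-sided bound on $\omega_k$ become transparent.
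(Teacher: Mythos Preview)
Your approach is workable and in the right spirit, but it differs from the paper's route in two places that are worth noting.

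First, the paper does not use Lemma~\ref{le:tech_lemma2}(a) here at all. Instead it expands both squared norms directly in the basis $(\hat{x}^k - x^k,\; x^k - x)$: since $\omega_k(\hat{x}^k - x^k) = x^k - x^{k-1}$, one has
\[
\tau_{k-1}^2\bigl\Vert \tfrac{1}{\tau_{k-1}}[x^k - (1-\tau_{k-1})x^{k-1}] - x\bigr\Vert^2
= \bigl\Vert (1-\tau_{k-1})\omega_k(\hat{x}^k - x^k) + \tau_{k-1}(x^k - x)\bigr\Vert^2,
\]
and similarly for the other term. After this, \eqref{eq:element_est12} becomes
\[
2c_1\iprods{\hat{x}^k - x^k,\, x - x^k} \;\le\; c_2\norms{\hat{x}^k - x^k}^2 + c_3\norms{x - x^k}^2,
\]
and the paper uses only the \emph{sufficient} condition $c_1 \ge 0$, $c_2 \ge c_1$, $c_3 \ge c_1$ (via $2\iprods{u,v} \le \norms{u}^2 + \norms{v}^2$), not a full PSD/discriminant test. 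Each of these three scalar inequalities turns out to be equivalent to exactly one of the three endpoints in \eqref{eq:x_extra_step}, so the well-definedness of $\omega_k$ and the role of \eqref{eq:element_cond2} become completely transparent.

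Second, a small correction to your own setup: for a form $A\norms{u}^2 + B\norms{v}^2 + C\norms{u-v}^2$, the negative-semidefiniteness conditions involve $A+C$, $B+C$, and $(A+C)(B+C) \ge C^2$, not $A \le 0$, $B \le 0$, $AB \ge C^2/4$. Your approach would still go through after this fix, but the paper's choice of basis and its simpler sufficient condition avoid that bookkeeping and make the correspondence between the three endpoints of \eqref{eq:x_extra_step} and the three scalar checks one-to-one.
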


\begin{proof}
Firstly, from the definition $\hat{x}^k := x^k + \frac{1}{\omega_k}(x^k - x^{k-1})$ of $\hat{x}^k$, we have $\omega_k(\hat{x}^k - x^k) = x^k - x^{k-1}$.
Hence, we can show that
\begin{equation*}
\arraycolsep=0.2em
\begin{array}{lcl}
\tau_{k-1}^2\norms{\frac{1}{\tau_{k-1}}[x^k - (1-\tau_{k-1})x^{k-1}] - x}^2 &= & \norms{(1-\tau_{k-1})(x^k - x^{k-1}) + \tau_{k-1}(x^k - x)}^2 \vspace{1ex}\\
&= & \norms{(1-\tau_{k-1})\omega_k(\hat{x}^k - x^{k}) + \tau_{k-1}(x^k - x)}^2 \vspace{1ex}\\
&= & \omega_k^2(1-\tau_{k-1})^2\norms{\hat{x}^k - x^k}^2 +  \tau_{k-1}^2\norms{x^k - x}^2\vspace{1ex}\\
&& + {~} 2\omega_k(1-\tau_{k-1})\tau_{k-1}\iprods{\hat{x}^k - x^k, x^k - x}.
\end{array}
\end{equation*}
Alternatively, we also have
\begin{equation*}
\arraycolsep=0.2em
\begin{array}{lcl}
\tau_k^2\norms{\frac{1}{\tau_k}[\hat{x}^k - (1-\tau_k)x^k] - x}^2 & = &  \norms{\hat{x}^k - x^k}^2 + \tau_k^2\norms{x^k - x}^2  + 2\tau_k\iprods{\hat{x}^k - x^k, x^k - x}.
\end{array}
\end{equation*}
Utilizing the two last expressions, \eqref{eq:element_est12} can be rewritten equivalently to
\begin{equation*}
\arraycolsep=0.2em
\begin{array}{lcl}
\Tc_{[1]} &:= & 2\left[  \left(L_{k-1} + \mu_f\right)(1-\tau_k)(1-\tau_{k-1})\tau_{k-1}\omega_k - L_k\tau_k \right] \iprods{\hat{x}^k - x^k, x - x^k} \vspace{1ex}\\
&\leq & \left[ \left(L_{k-1} + \mu_f \right)(1-\tau_k)(1-\tau_{k-1})^2\omega_k^2 -  L_{k}\right] \norms{\hat{x}^k - x^k}^2 \vspace{1ex}\\
&& + {~} \left[ \left(L_{k-1} + \mu_f\right)(1-\tau_k)\tau_{k-1}^2 - L_k\tau_k^2 + \mu_f\tau_k(1-\tau_k)\right] \norms{x^k - x}^2.
\end{array}
\end{equation*} 
Now, let us denote
\begin{equation*}
\arraycolsep=0.2em
\left\{\begin{array}{lcl}
c_1 &:= & \left(L_{k-1}  + \mu_f\right)(1-\tau_k)(1-\tau_{k-1})\tau_{k-1}\omega_k - L_k\tau_k\vspace{1ex}\\
c_2 &:= & \left(L_{k-1} + \mu_f\right)(1-\tau_k)(1-\tau_{k-1})^2\omega_k^2 - L_k \vspace{1ex}\\
c_3 &:= & \left(L_{k-1} + \mu_f\right)(1-\tau_k)\tau_{k-1}^2 - L_k\tau_k^2 + \mu_f(1-\tau_k)\tau_k.
\end{array}\right.
\end{equation*}
Then, \eqref{eq:element_est12} is equivalent to 
\begin{equation}\label{eq:need_to_prove}
2c_1\iprods{\hat{x}^k - x^k, x - x^k} \leq c_2\norms{\hat{x}^k - x^k}^2 + c_3\norms{x - x^k}^2.
\end{equation}
Secondly, we need to guarantee that $c_1 \geq 0$.
This condition holds if we choose $\omega_k$ such that
\begin{equation}\label{eq:upper_bound_omegak2}
\omega_k \geq \frac{a_k\tau_k}{(1-\tau_k)(1-\tau_{k-1})\tau_{k-1}}.
\end{equation}
Thirdly, we also need to guarantee $c_2 \geq c_1$, which is equivalent to
\begin{equation*}
c_2 - c_1 =   \left(L_{k-1}  + \mu_f\right)(1-\tau_k)(1-\tau_{k-1})\left[(1-\tau_{k-1})\omega_k^2 - \tau_{k-1}\omega_k \right] - L_k(1-\tau_k)  \geq 0.
\end{equation*}
This condition holds if 
\begin{equation}\label{eq:upper_bound_omegak1}
\omega_k \geq \frac{\tau_{k-1} + \sqrt{\tau_{k-1}^2 +  4a_k}}{2(1-\tau_{k-1})}.
\end{equation}
Alternatively, we also need to guarantee $c_3 \geq c_1$, which is equivalent to
\begin{equation*}
c_3 - c_1 =   \left(L_{k-1}  + \mu_f\right)(1-\tau_k)\left[\tau_{k-1}^2 - (1-\tau_{k-1})\tau_{k-1}\omega_k \right] + (L_k + \mu_f)\tau_k(1-\tau_k) \geq 0.
\end{equation*}
This condition holds if
\begin{equation}\label{eq:lower_bound_omegak}
\omega_k \leq \frac{\tau_{k-1}^2 + m_k\tau_k}{\tau_{k-1}(1-\tau_{k-1})}.
\end{equation}
Combining \eqref{eq:upper_bound_omegak2}, \eqref{eq:upper_bound_omegak1}, and \eqref{eq:lower_bound_omegak}, we obtain
\begin{equation*}
\max\set{\frac{\tau_{k-1} + \sqrt{\tau_{k-1}^2 +  4a_k}}{2(1-\tau_{k-1})}, \frac{a_k\tau_k}{(1-\tau_k)(1-\tau_{k-1})\tau_{k-1}}} \leq \omega_k \leq \frac{\tau_{k-1}^2 + m_k\tau_k}{\tau_{k-1}(1-\tau_{k-1})},
\end{equation*}
which is exactly \eqref{eq:x_extra_step}.
Here, under the condition \eqref{eq:element_cond2}, the left-hand side of the last expression is less than or equal to the right-hand side.
Therefore, $\omega_k$ is well-defined.

Finally, under the choice of $\omega_k$ as in \eqref{eq:x_extra_step}, we have $c_2 \geq c_1 \geq 0$ and $c_3\geq c_1 \geq 0$.
Hence, \eqref{eq:need_to_prove} holds, which is also equivalent to \eqref{eq:element_est12}.
\Eproof
\end{proof}

\beforesec
\section{Appendix 2: Technical Proof of Lemmas \ref{le:descent_pro} and \ref{le:maintain_gap_reduction1}  in Section~\ref{sec:alg_scheme}}\label{apdx:sec:A2}
\aftersec
This section provides the full proof of Lemma~\ref{le:descent_pro} and Lemma~\ref{le:maintain_gap_reduction1} in the main text.

\beforesubsec
\subsection{The proof of Lemma~\ref{le:descent_pro}: Key estimate of the primal-dual step~\eqref{eq:prox_grad_step0}}\label{apdx:le:descent_pro}
\aftersubsec
\begin{proof}
From the first line of \eqref{eq:prox_grad_step0} and Lemma~\ref{le:g_properties}(a),  we have $\nabla_ug_{\beta_k}(K\hat{x}^k, \dot{y}) = K^{\top}y^{k+1}$.
Now, from the second line of \eqref{eq:prox_grad_step0}, we also have 
\begin{equation*}
0 \in \partial{f}(x^{k+1}) + L_k(x^{k+1} - \hat{x}^k) + K^{\top}\nabla_u{g_{\beta_k}}(K\hat{x}^k, \dot{y}).
\end{equation*}
Combining this inclusion and the $\mu_f$-convexity of $f$, for any $x\in\dom{f}$, we get
\begin{equation*}
\begin{array}{lcl}
f(x^{k+1}) & \leq & f(x) + \iprods{\nabla_u{g_{\beta_k}}(K\hat{x}^k, \dot{y}), K(x - x^{k+1})}  + L_k\iprods{x^{k+1} - \hat{x}^k, x - x^{k+1}} \vspace{1ex}\\
&& - {~} \frac{\mu_f}{2}\norms{x^{k+1} - x}^2.
\end{array}
\end{equation*}
Since $g_{\beta}(\cdot, \dot{y})$ is $\frac{1}{\beta + \mu_{g^{*}}}$-smooth by Lemma~\ref{le:g_properties}(a), for any $x\in\dom{f}$, we have
\begin{equation*}
\arraycolsep=0.2em
\begin{array}{lcl}
g_{\beta_k}(Kx^{k+1}, \dot{y}) & \leq & g_{\beta_k}(K\hat{x}^k, \dot{y}) + \iprods{\nabla_u{g}_{\beta_k}(K\hat{x}^k, \dot{y}), K(x^{k+1} - \hat{x}^k)} \vspace{1ex}\\
&& + {~} \frac{1}{2(\beta_k + \mu_{g^{*}})}\Vert K(x^{k+1} - \hat{x}^k)\Vert^2 \vspace{1ex}\\
& = & g_{\beta_k}(K\hat{x}^k, \dot{y}) + \iprods{\nabla_ug_{\beta_k}(K\hat{x}^k, \dot{y}), K(x - \hat{x}^k)}  \vspace{1ex}\\
&& - {~}  \iprods{\nabla_u{g}_{\beta_k}(K\hat{x}^k, \dot{y}), K(x - x^{k+1})} \vspace{1ex}\\
&& + {~} \frac{1}{2(\mu_{g^{*}} + \beta_k)}\Vert K(x^{k+1} - \hat{x}^k)\Vert^2.
\end{array}
\end{equation*}
Now, combining the last two estimates, we get
\begin{equation}\label{eq:proof_est1}
\hspace{-1ex}
\arraycolsep=0.2em
\begin{array}{lcl}
f(x^{k+1}) + g_{\beta_k}(Kx^{k+1}, \dot{y}) & \leq & f(x) + g_{\beta_k}(K\hat{x}^k, \dot{y}) +  \iprods{\nabla_ug_{\beta_k}(K\hat{x}^k, \dot{y}), K(x - \hat{x}^k)} \vspace{1ex}\\
&& + {~} L_k\iprods{x^{k+1} - \hat{x}^k, x - \hat{x}^k}   -  L_k\norms{x^{k+1} - \hat{x}^k}^2 \vspace{1ex}\\
&& + {~}  \frac{1}{2(\mu_{g^{*}} + \beta_k)}\Vert K(x^{k+1} - \hat{x}^k)\Vert^2   - \frac{\mu_f}{2}\norms{x - x^{k+1}}^2.
\end{array}
\hspace{-5ex}
\end{equation}
Using Lemma~\ref{le:g_properties}(a) again, we have
\begin{equation}\label{eq:proof_est2}
\arraycolsep=0.2em
\hspace{-2ex}
\begin{array}{lcl}
\ell_{\beta_k}(x^k, \dot{y}) &:= &  g_{\beta_k}(K\hat{x}^k, \dot{y}) + \iprods{\nabla_ug_{\beta_k}(K\hat{x}^k, \dot{y}), K(x^k - \hat{x}^k)} \vspace{1ex}\\
&\leq & g_{\beta_k}(Kx^k, \dot{y})   -  \frac{\beta_k+\mu_{g^{*}}}{2}\norms{\nabla_ug_{\beta_k}(K\hat{x}^k, \dot{y}) - \nabla_ug_{\beta_k}(Kx^k, \dot{y})}^2.
\end{array}
\hspace{-2ex}
\end{equation}
Substituting $x := x^k$ into \eqref{eq:proof_est1}, and multiplying the result by $1-\tau_k$ and adding the result to  \eqref{eq:proof_est1} after multiplying it by $\tau_k$, then using 
\eqref{eq:proof_est2}, we can derive
\begin{equation}\label{eq:proof_est3} 
\hspace{-1ex}
\arraycolsep=0.2em
\begin{array}{lcl}
F_{\beta_k}(x^{k+1}, \dot{y}) &:= &  f(x^{k+1}) + g_{\beta_k}(Kx^{k+1}, \dot{y}) \vspace{1ex}\\
& \leq & (1 - \tau_k)[ f(x^k) + g_{\beta_k}(Kx^k, \dot{y})]    +  \tau_k\left[f(x) + \ell_{\beta_k}(x, \dot{y}) \right] \vspace{1ex}\\
&& - {~} L_k\norms{x^{k+1} - \hat{x}^k}^2 + \frac{1}{2(\mu_{g^{*}}+\beta_k)}\Vert K(x^{k+1} - \hat{x}^k)\Vert^2  \vspace{1ex}\\
&& + {~} L_k\iprods{x^{k+1} - \hat{x}^k, \tau_kx - \hat{x}^k +  (1-\tau_k)x^k} \vspace{1ex}\\
&& - {~} \frac{\mu_f}{2}\left[(1-\tau_k)\norms{x^{k+1} - x^k}^2 + \tau_k\norms{x - x^{k+1}}^2\right] \vspace{1ex}\\
&& - {~} \frac{(1-\tau_k)(\beta_k+\mu_{g^{*}})}{2}\norms{\nabla_ug_{\beta_k}(K\hat{x}^k, \dot{y}) - \nabla_ug_{\beta_k}(Kx^k, \dot{y})}^2.
\end{array}
\hspace{-5ex}
\end{equation}
From Lemma~\ref{le:tech_lemma2}(a), we can easily show that
\begin{equation*}
\begin{array}{lcl}
(1-\tau_k)\norms{x^{k+1} - x^k}^2 + \tau_k\norms{x^{k+1} - x}^2 & = &   \tau_k^2\norms{\tfrac{1}{\tau_k}[ x^{k+1} - (1-\tau_k)x^k] - x}^2 \vspace{1ex}\\
&& + {~} \tau_k(1-\tau_k)\norms{x - x^k}^2.
\end{array}
\end{equation*}
We also have the following elementary relation
\begin{equation*}
\arraycolsep=0.1em
\begin{array}{lcl}
\iprods{x^{k+1} - \hat{x}^k, \tau_k x - [\hat{x}^k - (1-\tau_k)x^k]}  &= & \frac{\tau_k^2}{2}\norms{\tfrac{1}{\tau_k}[\hat{x}^k - (1-\tau_k)x^k] - x}^2   +  \frac{1}{2}\norms{x^{k+1} - \hat{x}^k}^2 \vspace{1ex}\\
&&  - {~} \frac{\tau_k^2}{2}\norms{\tfrac{1}{\tau_k}[x^{k+1} - (1-\tau_k)x^k] - x}^2.
\end{array}
\end{equation*}
Substituting the two last expressions into \eqref{eq:proof_est3}, we obtain
\begin{equation}\label{eq:proof4} 
\arraycolsep=0.2em
\begin{array}{lcl}
F_{\beta_k}(x^{k+1}, \dot{y})  & \leq & (1 - \tau_k) F_{\beta_k}(x^k,  \dot{y})  + \tau_k \left[ f(x)  + \ell_{\beta_k}(x, \dot{y}) \right] \vspace{1ex}\\
&& + {~} \frac{L_k\tau_k^2}{2} \norms{\tfrac{1}{\tau_k}[\hat{x}^k - (1-\tau_k)x^k] - x}^2 \vspace{1ex}\\
&& - {~} \frac{\tau_k^2}{2}\left( L_k + \mu_f \right) \norms{\tfrac{1}{\tau_k}[x^{k+1} - (1-\tau_k)x^k] - x}^2  \vspace{1ex}\\
&& - {~} \frac{(1-\tau_k)(\mu_{g^{*}} + \beta_k)}{2}\norms{\nabla_ug_{\beta_k}(K\hat{x}^k, \dot{y}) - \nabla_ug_{\beta_k}(Kx^k, \dot{y})}^2 \vspace{1ex}\\
&& - {~} \frac{L_k}{2}\norms{x^{k+1} - \hat{x}^k}^2 + \frac{1}{2(\mu_{g^{*}}+\beta_k)}\Vert K(x^{k+1} - \hat{x}^k)\Vert^2  \vspace{1ex}\\
&&  - {~} \frac{\mu_f(1-\tau_k)\tau_k}{2}\norms{x - x^k}^2.
\end{array}
\hspace{-2ex}
\end{equation}
One the one hand, by \eqref{eq:key_bound_apdx4} of Lemma~\ref{le:g_properties}, we have 
\begin{equation*}
F_{\beta_k}(x^k, \dot{y}) \leq F_{\beta_{k-1}}(x^k, \dot{y}) + \frac{(\beta_{k-1} - \beta_k)}{2}\norms{\nabla_ug_{\beta_{k}}(Kx^k, \dot{y}) - \dot{y}}^2.
\end{equation*}
On the other hand, by \eqref{eq:key_bound_apdx5} of Lemma~\ref{le:g_properties}, we get
\begin{equation*}
f(x) + \ell_{\beta_k}(x, \dot{y}) \leq \Lc(x, y^{k+1}) - \frac{\beta_k}{2}\norms{ \nabla_ug_{\beta_{k}}(K\hat{x}^k, \dot{y}) - \dot{y}}^2,
\end{equation*}
where $\Lc(x, y^{k+1}) := f(x) +  \iprods{Kx, y^{k+1}} - g^{*}(y^{k+1})$ is the Lagrange function in \eqref{eq:saddle_point_prob}.

Now, substituting the last two inequalities into \eqref{eq:proof4}, and using Lemma~\ref{le:tech_lemma2}(b) with $w := \nabla_ug_{\beta_k}(K\hat{x}^k, \dot{y}) - \dot{y}$ and $z := \nabla_ug_{\beta_k}(Kx^k, \dot{y}) - \dot{y}$, we arrive at
\begin{equation*} 
\arraycolsep=0.2em
\begin{array}{lcl}
F_{\beta_k}(x^{k+1}, \dot{y})  & \leq & (1 - \tau_k) F_{\beta_{k-1}}(x^k, \dot{y})  + \tau_k\Lc(x, y^{k+1}) + \frac{L_k\tau_k^2}{2} \norms{\tfrac{1}{\tau_k}[\hat{x}^k - (1-\tau_k)x^k] - x}^2 \vspace{1ex}\\
&& - {~} \frac{\tau_k^2}{2}\left( L_k + \mu_f \right) \norms{\tfrac{1}{\tau_k}[x^{k+1} - (1-\tau_k)x^k] - x}^2 - \frac{\mu_f(1-\tau_k)\tau_k}{2}\norms{x - x^k}^2 \vspace{1ex}\\
&& - {~} \frac{L_k}{2}\norms{x^{k+1} - \hat{x}^k}^2 + \frac{1}{2(\mu_{g^{*}}+\beta_k)}\Vert K(x^{k+1} - \hat{x}^k)\Vert^2  \vspace{1ex}\\
&&  - {~} \frac{(1-\tau_k)}{2}\left[ \tau_k\beta_k - (\beta_{k-1} - \beta_k)\right]\norms{ \nabla_ug_{\beta_k}(Kx^k, \dot{y})  - \dot{y}}^2 \vspace{1ex}\\
&& - {~} \frac{\beta_k}{2}\norms{\nabla_ug_{\beta_k}(K\hat{x}^k, \dot{y}) - \dot{y} - (1-\tau_k)\left[ \nabla_ug_{\beta_k}(Kx^k, \dot{y})  - \dot{y} \right]}^2 \vspace{1ex}\\
&& - {~} \frac{(1-\tau_k)\mu_{g^{*}}}{2}\norms{\nabla_ug_{\beta_k}(K\hat{x}^k, \dot{y}) - \nabla_ug_{\beta_k}(Kx^k, \dot{y})}^2.
\end{array}
\end{equation*}
By dropping the last two nonpositive terms in the last inequality, we obtain \eqref{eq:key_est00_a}. 
\Eproof
\end{proof}

\beforesubsec
\subsection{The proof of Lemma~\ref{le:maintain_gap_reduction1}: Recursive estimate of the Lyapunov function}\label{apdx:le:maintain_gap_reduction1}
\aftersubsec
\begin{proof}
First, from the last line $\tilde{y}^{k+1} = (1-\tau_k)\tilde{y}^k + \tau_ky^{k+1}$ of \eqref{eq:prox_grad_scheme}, and the $\mu_{g^{*}}$-convexity of $g^{*}$, we have 
\begin{equation*}
\begin{array}{lcl}
\Lc(x, \tilde{y}^{k+1}) & := & f(x) + \iprods{Kx, \tilde{y}^{k+1}} - g^{*}(\tilde{y}^{k+1}) \vspace{1ex}\\
& \geq & (1-\tau_k)\Lc(x, \tilde{y}^k) + \tau_k\Lc(x, y^{k+1}) + \frac{\mu_{g^{*}}\tau_k(1-\tau_k)}{2}\norms{y^{k+1} - \tilde{y}^k}^2.
\end{array}
\end{equation*}
Hence, $\tau_k\Lc(x, y^{k+1}) \leq \Lc(x, \tilde{y}^{k+1}) - (1-\tau_k)\Lc(x, \tilde{y}^k) - \frac{\mu_{g^{*}}\tau_k(1-\tau_k)}{2}\norms{y^{k+1} - \tilde{y}^k}^2$.
Substituting this estimate into \eqref{eq:key_est00_a} and dropping the term $- \frac{\mu_{g^{*}}\tau_k(1-\tau_k)}{2}\norms{y^{k+1} - \tilde{y}^k}^2$, we can derive
\begin{equation}\label{eq:key_est00_proof1}
\hspace{-2ex}
\arraycolsep=0.1em
\begin{array}{lcl}
F_{\beta_k}(x^{k+1},\dot{y})  & \leq & (1 - \tau_k) F_{\beta_{k-1}}(x^k, \dot{y})  + \Lc(x, \tilde{y}^{k+1}) - (1-\tau_k)\Lc(x, \tilde{y}^k) \vspace{1ex}\\
&& + {~} \frac{L_k\tau_k^2}{2} \big\Vert \tfrac{1}{\tau_k}[\hat{x}^k - (1-\tau_k)x^k] - x \big\Vert^2 \vspace{1ex}\\
&& - {~} \frac{\tau_k^2}{2}\left(L_k + \mu_f\right) \big\Vert \tfrac{1}{\tau_k}[x^{k+1} - (1-\tau_k)x^k] - x \big\Vert^2  \vspace{1ex}\\
&& - {~} \frac{L_k}{2}\norms{x^{k+1} - \hat{x}^k}^2 + \frac{1}{2(\mu_{g^{*}} + \beta_k)}\Vert K(x^{k+1} - \hat{x}^k)\Vert^2 \vspace{1ex}\\
&& - {~} \frac{\mu_f\tau_k(1-\tau_k)}{2}\norms{x^k - x}^2.
\end{array}
\hspace{-2ex}
\end{equation}
Now, it is obvious to show that the condition \eqref{eq:param_cond} is equivalent to the condition \eqref{eq:element_cond2} of Lemma~\ref{le:element_est12}. 
In addition, we choose $\eta_k = \frac{1}{\omega_k}$ in our update \eqref{eq:Lk_m_k}, where $\omega_k := \frac{\tau_{k-1}^2 + m_k\tau_k}{\tau_{k-1}(1-\tau_{k-1})}$, which is the upper bound of \eqref{eq:x_extra_step}.
Hence, \eqref{eq:x_extra_step} automatically holds.
Using \eqref{eq:element_est12}, we have
\begin{equation*}
\begin{array}{lcl}
\Tc_{[2]} &:= & \frac{L_k\tau_k^2}{2} \big\Vert \tfrac{1}{\tau_k}[\hat{x}^k - (1-\tau_k)x^k] - x \big\Vert^2 - \frac{\mu_f\tau_k(1-\tau_k)}{2}\norms{x^k - x}^2 \vspace{1ex}\\
&\leq & \frac{\tau_{k-1}^2}{2} (1-\tau_k)\left(L_{k-1} + \mu_f\right)\big\Vert \tfrac{1}{\tau_{k-1}}[x^{k} - (1-\tau_{k-1})x^{k-1}] - x \big\Vert^2.
\end{array}
\end{equation*}
Moreover, $\frac{1}{2(\mu_{g^{*}} + \beta_k)}\Vert K(x^{k+1} - \hat{x}^k)\Vert^2 \leq \frac{\norms{K}^2}{2(\mu_{g^{*}} + \beta_k)}\Vert x^{k+1} - \hat{x}^k\Vert^2 = \frac{L_k}{2}\norms{x^{k+1} - \hat{x}^k}^2$ due to the definition of $L_k$ in \eqref{eq:Lk_m_k}.
Substituting these two estimates into \eqref{eq:key_est00_proof1}, and utilizing the definition \eqref{eq:lyapunov_func1} of $\Vc_k$,  we obtain \eqref{eq:key_est1_ncvx}.
\Eproof
\end{proof}

\bibliographystyle{plain}

\end{document}